\documentclass[10pt]{article}

\usepackage[margin=2cm]{geometry}
\usepackage{amsmath,amsthm,amsfonts,cite}
\usepackage{thmtools}
\usepackage[yyyymmdd,hhmmss]{datetime}
\usepackage{tikz}

\declaretheorem[style=plain]{theorem}
\declaretheorem[sibling=theorem,style=plain]{corollary}
\declaretheorem[sibling=theorem,style=plain]{lemma}
\declaretheorem[sibling=theorem,style=plain]{proposition}

\declaretheorem[sibling=theorem,style=definition,qed=$\bullet$]{definition}

\declaretheorem[sibling=theorem,style=remark,qed=$\bullet$]{remark}

\allowdisplaybreaks[4]

\let\oldbibliography\thebibliography
\renewcommand{\thebibliography}[1]{%
  \oldbibliography{#1}%
  \setlength{\itemsep}{-2pt}%
}
\usepackage{hyperref}
\usepackage{float}
\usepackage{multirow}
\usepackage{tabu}
\usetikzlibrary{math,cd,decorations.shapes}

\newcommand{\eqn}[1]{\begin{equation*} #1 \end{equation*}}
\newcommand{\neqn}[1]{\begin{equation} #1 \end{equation}}

\newcommand{\set}[1]{\left\{ #1 \right\}}
\newcommand{\seqnum}[1]{\href{https://oeis.org/#1}{\rm \underline{#1}}}
\newcommand{\doi}[1]{\textsc{DOI}: \href{https://dx.doi.org/#1}{#1}}

\newcommand{\qquadtext}[1]{\qquad \text{#1} \qquad}
\newcommand{\seq}[0]{\operatorname{SEQ}}

\newcommand{\abs}[1]{\left\vert #1 \right\vert}

\newcommand{\latticepath}[8]{\resizebox{#1}{!}{
\begin{tikzpicture}
#7
\tikzmath{\a = #3; \b = #4; \n = #5; \an = \a*\n; \bn = \b*\n; \abn = \an+\bn; \nm = \n-1; \abnh = \abn-0.5;}
\foreach \i in {0,...,\an}
    \foreach \j in {0,...,\bn}
        \fill (\i,\j) circle (3pt);
\foreach \i in {1,...,\nm}
    \fill[white] (\a*\i,\b*\i) circle (4pt);
\fill (0,0) circle (3pt);
\fill (\an,\bn) circle (3pt);
\draw[ultra thin,black] (0,0) -- (\an,\bn);
\tikzmath{\xa = 0; \ya = 0; \xb = 0; \yb = 0;}
\foreach \c in {#6}{
    \ifthenelse{\equal{\c}{R}}{
        \pgfmathparse{\xb+1}
        \xdef \xb {\pgfmathresult}
    }{
        \pgfmathparse{\yb+1}
        \xdef \yb {\pgfmathresult}
    }
    \draw[line width=#2, black] (\xa, \ya) -- (\xb, \yb);
    \ifthenelse{\equal{\c}{R}}{
        \pgfmathparse{\xa+1}
        \xdef \xa {\pgfmathresult}
    }{
        \pgfmathparse{\ya+1}
        \xdef \ya {\pgfmathresult}
    }
}
\foreach \x in {0,...,\an}{
    \node at (\x,-2/5) {\x};
}
\foreach \y in {0,...,\bn}{
    \node at (-1/3,\y) {\y};
}
#8
\end{tikzpicture}
}}

\newcommand{\latticepathstrip}[9]{\resizebox{#1}{!}{
\begin{tikzpicture}
#8
\tikzmath{\a = #3; \b = #4; \n = #5; \an = \a*\n; \i = #6; \im = \i-1; \ani = \an-\i; \bn = \b*\n; \abn = \an+\bn; \nm = \n-1; \abnh = \abn-0.5;}
\foreach \x in {0,...,\ani}{
    \foreach \y in {0,...,\bn}{
        \fill (\x,\y) circle (3pt);
    }
}
\foreach \ii in {0,...,\i}{
    \foreach \j in {1,...,\nm}{
        \fill[white] (\a*\j-\ii,\b*\j) circle (4pt);
    }
}
\fill[fill=gray!30] (0,0) -- (\ani,\ani*\b/\a) -- (\ani,\bn) -- (0,\i*\b/\a) -- cycle;
\fill (0,0) circle (3pt);
\fill (\ani,\bn) circle (3pt);
\draw[ultra thin,black] (0,0) -- (\ani,\ani*\b/\a);
\draw[ultra thin, black] (0,\i*\b/\a) -- (\ani,\bn);
\tikzmath{\xa = 0; \ya = 0; \xb = 0; \yb = 0;}
\foreach \c in {#7}{
    \ifthenelse{\equal{\c}{R}}{
        \pgfmathparse{\xb+1}
        \xdef \xb {\pgfmathresult}
    }{
        \pgfmathparse{\yb+1}
        \xdef \yb {\pgfmathresult}
    }
    \draw[line width=#2, black] (\xa, \ya) -- (\xb, \yb);
    \ifthenelse{\equal{\c}{R}}{
        \pgfmathparse{\xa+1}
        \xdef \xa {\pgfmathresult}
    }{
        \pgfmathparse{\ya+1}
        \xdef \ya {\pgfmathresult}
    }
}
\foreach \x in {0,...,\ani}{
    \node at (\x,-2/5) {\x};
}
\foreach \y in {0,...,\bn}{
    \node at (-1/3,\y) {\y};
}
#9
\end{tikzpicture}
}}

\makeatletter
\newcommand{\citecomment}[2][]{\citen{#2}#1\citevar}
\newcommand{\citeone}[1]{\citecomment{#1}}
\newcommand{\citetwo}[2][]{\citecomment[,~#1]{#2}}
\newcommand{\citevar}{\@ifnextchar\bgroup{;~\citeone}{\@ifnextchar[{;~\citetwo}{]}}}
\newcommand{\citefirst}{\@ifnextchar\bgroup{\citeone}{\@ifnextchar[{\citetwo}{]}}}
\newcommand{\cites}{[\citefirst}
\makeatother

\usepackage{authblk}
\title{Minimal Bridges and a Rotation-Based Bijection}
\author[1]{Benjamin Lou}
\author[2]{Lucas Augustus Brown}
\affil[1]{\begin{small}Center for Theoretical Physics, Massachusetts Institute of Technology, Cambridge, MA 02139, USA; \texttt{benlou5@mit.edu}\end{small}}
\affil[2]{\begin{small}\texttt{lucasbrown.cit@gmail.com}\end{small}}
\date{\the\year--\twodigit{\the\month}--\twodigit{\the\day}}

\begin{document}
\maketitle

\begin{abstract}
A classical problem in lattice path enumeration counts paths that remain on one side of a boundary line. We study several classes of paths where this boundary is porous and show that they are related through a single half-turn rotation bijection. As a first application, we enumerate minimal bridges by relating them to excursions: for positive integers $k$ and $n$, the number of paths from $(0,0)$ to $(kn,n)$ with unit right and up steps that avoid all other lattice points on the line $y=x/k$ is $\frac{k}{kn+n-1}\binom{kn+n}{n}$. The same bijection yields a relation between the ordinary generating functions for binomial coefficients and $k$-Catalan numbers through a dual edge-forbidden model, extends to forbidden strips containing the diagonal, and handles a rational-slope case involving Duchon paths. Finally, our bijection also proves that the number of bridges from $(0,0)$ to $(2n,2n)$ that avoid even diagonal points is $C_{2n}+4C_{2n-1}$, with $C_n$ the $n$th Catalan number. This complements a result of Shapiro.
\end{abstract}

\noindent \textbf{Keywords:} lattice path; bijection; Catalan numbers; ordinary generating function; bridge\\

\noindent \textbf{2020 Mathematics Subject Classification:} Primary 05A15, secondary 05A10, 05A19

\section{Introduction}

By \emph{bridge}, we mean a lattice path that is composed of only unit steps in the rightward and upward directions, starting from $(0,0)$ and ending on a prescribed line through the origin. A classic problem is to count the bridges satisfying a restriction involving the diagonal; for example, bridges staying weakly or strictly above the line are known as \emph{excursions} or \emph{minimal excursions}, respectively.\footnote{It is more standard for the terms ``bridge" and ``excursion" to apply to paths whose distinguished line is the $x$-axis, and whose steps are of the form $(a,b)$ in $\mathbb{Z}\times\mathbb{Z}$, as seen in \cite[Fig. 1]{banderier2002}.  To achieve closed formulas, the steps are usually restricted; with steps of $(1,k)$ and $(1,-1)$, this setting is isomorphic to ours. We consider some generalizations later.}  Examples are provided in Figure \ref{fig:PathTypeExamples1}.  When the distinguished line is $y=x$, excursions are Dyck paths, counted by the Catalan numbers \cite[Prop. 1]{whitworth1878}. More generally, when the line is $y=x/k$ for $k\in \mathbb{Z}^+$, the paths are then counted by the $k$-Catalan numbers \cite{barbier1887,lyness1941}, as shown by Spitzer's cycle lemma \cites[Thm. 2.1]{spitzer1956}[\S10.4]{encylatt}.  In these situations, the diagonal acts as a hard boundary: the path is required to remain on one prescribed side of the line.

We study several ways of making this hard boundary porous, exhibiting a single rotation-based bijection that controls the resulting enumeration in each setting.
In section \ref{results}, we consider \textit{minimal bridges}, these being bridges that do not contain lattice points on the diagonal, aside from the endpoints; see Figure \ref{fig:PathTypeExamples1} for an example. We use our bijection to prove that for $k\in \mathbb{Z}^+$, there are $k+1$ times as many minimal bridges as minimal excursions with endpoint $(kn,n)$; see Figure \ref{fig:square}. 

Specifically, the minimal bridges may be divided into $k+1$ partitions, one of which is exactly the set of minimal excursions. These partitions can be mapped into each other with a suitable rotation, showing that they have the same size. It then follows that the number of minimal bridges $\widetilde{b}_k(n)$ and the corresponding OGF $\widetilde{B}_k(x)$ satisfy
\eqn{\widetilde{b}_k(n)=\frac{k}{kn+n-1}\cdot\binom{kn+n}{n} \qquadtext{and} \widetilde{B}_k(x) \cdot (k + 1 - \widetilde{B}_k(x))^k = (k+1)^{k+1} \cdot x.}
We similarly enumerate a generalization where the path cannot contain any lattice points in an entire strip along the diagonal; see Figure \ref{fig:StripExample1}.

We introduce a condition dual to the minimal condition that we call the \emph{valve condition}. Specifically, a \textit{valved path} can cross the diagonal with an up step but not a right step (the right step off the origin is always allowed). This condition is dual to the minimal condition because it forbids edges rather than lattice points; see Figure \ref{fig:PathTypeExamples1}. We call this the valve condition because the diagonal allows travel in one direction but not the other. With $y=x/k$, our bijection associates each (not-necessarily-minimal) excursion with $k+1$ bridges satisfying this valve condition. Putting these results together completes the square shown in Figure \ref{fig:square}.

Thus far, we have discussed bridges with respect to the line $y=x/k$ for $k\in \mathbb{Z}^+$. More generally, the definition of a bridge allows $k\in \mathbb{Q}^+$, and we consider this possibility in section \ref{generalization}.  Taking the line to be $y=2x/3$, excursions are known as Duchon paths \cite[Example 5]{banderier2002}.  Let $\widetilde{D}(n)$ be the number of minimal Duchon paths, and let $\widetilde{V}(n)$ be the number of minimal valved bridges. Combining our bijection with a
lemma of Nakamigawa and Tokushige \cite[Thm.~1.1]{NakamigawaTokushige2011}, we show that
\eqn{\widetilde{V}(n)=6\widetilde{D}(n) - \frac{4}{5n-1}\cdot\binom{5n-1}{2n}.}

Finally, in section \ref{shapirosection}, we consider bridges ending at $(2n,2n)$, but rather than avoiding all lattice points on the diagonal, we only forbid the diagonal points with even coordinates, as seen in Figure \ref{y22figure}.  Using our rotation bijection and reflection symmetry, we show that there are $C_{2n}+4C_{2n-1}$ such bridges, where $C_k$ is the $k$\textsuperscript{th} Catalan number.  This complements a result of Shapiro \cite[Example 2.5]{shapiro1992}, where he considers the forbidden points to be the diagonal points with odd coordinates: he found that there are $C_{2n}$ such bridges.

\begin{figure}[H]
\centering
{\tabulinesep=2mm
\begin{tabu}{|c|c|} \hline
{\latticepath{0.42\textwidth}{0.8mm}{3}{1}{3}{R,R,R,R,U,R,U,U,R,R,R,R}{}{}} & {\latticepath{0.42\textwidth}{0.8mm}{3}{1}{3}{U,R,U,R,R,R,R,R,U,R,R,R}{}{\fill (3,1) circle (3pt); \fill (6,2) circle (3pt); \fill[white] (0.05,-0.2) -- (9.2,-0.2) -- (9.2,2.9) -- cycle;}} \\
(a) A minimal bridge. & (b) An excursion. \\\hline
{\latticepath{0.42\textwidth}{0.8mm}{3}{1}{3}{R,R,R,R,U,R,U,U,R,R,R,R}{}{\draw[thick, decorate, decoration={crosses, segment length = 6pt}, red] (3,1) -- (4,1) (6,2) -- (7,2); \fill (3,1) circle (3pt); \fill (6,2) circle (3pt);}} & {\latticepath{0.42\textwidth}{0.8mm}{3}{1}{3}{U,R,R,R,R,R,R,R,U,U,R,R}{}{\fill (3,1) circle (3pt); \fill (6,2) circle (3pt);}} \\
(c) A valved path. & (d) A non-valved, non-excursive, non-minimal bridge. \\\hline
\end{tabu}}
\caption{Examples of various types of bridge. A bridge is said to be \emph{minimal} if it is nontrivial and avoids all points on the diagonal that are not its endpoints. The reason for this name is that a bridge can be decomposed into sequences of bridges by splitting it at the points where it steps on the diagonal; a minimal bridge is one that is contained in every decomposition of that bridge.  (a) A minimal bridge that also happens to be valved and non-excursive. (b) The excursive criterion excludes all points strictly below the diagonal; therefore, excursions are always valved.  This example also happens to be non-minimal. (c) The valve criterion forbids certain \emph{steps} instead of points.  Both endpoints of a forbidden step are still allowed, unless some other condition excludes them.  This example also happens to be minimal and non-excursive.}
\label{fig:PathTypeExamples1}
\end{figure}

\begin{table}[h]
\centering
\begin{tabular}{|c||c|c|c|c|} \hline
    & \multirow{2}{*}{Bridges} & Minimal & \multirow{2}{*}{Excursions} & Minimal \\
    &         & bridges &            & excursions \\\hline\hline
\rule{0pt}{12pt} The set of all \ldots of arbitrary size & $\mathcal{B}_k$ & $\widetilde{\mathcal{B}}_k$ & $\mathcal{E}_k$ & $\widetilde{\mathcal{E}}_k$ \\\hline
\rule{0pt}{12pt} The set of all \ldots of size $n$ & $\mathcal{B}_k(n)$ & $\widetilde{\mathcal{B}}_k(n)$ & $\mathcal{E}_k(n)$ & $\widetilde{\mathcal{E}}_k(n)$ \\\hline
\rule{0pt}{12pt} Number of \ldots of size $n$ & $b_k(n)$ & $\widetilde{b}_k(n)$ & $e_k(n)$ & $\widetilde{e}_k(n)$ \\\hline
\rule{0pt}{12pt} Ordinary generating function & $B_k$ & $\widetilde{B}_k$ & $E_k$ & $\widetilde{E}_k$ \\\hline
\end{tabular}
\caption{Our symbology.  A bridge from $(0,0)$ to $(kn,n)$ will be said to have \emph{size} $n$.}
\label{SymbolTable}
\end{table}

\begin{figure}[H]
\centering
\begin{tikzcd}
\text{Bridges} && \text{Excursions} \\
\\
\text{Minimal bridges} && \text{Minimal excursions}
\arrow["{\text{Corollary }\ref{ben3}}"', from=1-3, to=1-1]
\arrow["{\seq}", from=3-1, to=1-1]
\arrow["{\seq}"', from=3-3, to=1-3]
\arrow["{k+1}", from=3-3, to=3-1]
\end{tikzcd}
\caption{Relationships among the sets, with the arrows pointing in the direction of more paths for a given choice of $k$ and $n$. The SEQ refers to the Sequence operation of the symbolic method \cite[\S I.2.1]{flajolet2009}. Our work establishes a multi-fold bijection linking the left and right sides of the square.} \label{fig:square}\end{figure}

\section{Proofs concerning the case of \texorpdfstring{$y=x/k$}{y=x/k}} \label{results}

We begin with a setting determined by the three integer parameters $k$, $n$, and $i$.  The parameters $k$ and $n$ are positive, and $i$ satisfies $0 \leq i < k$.

\begin{definition}[$d$, the strip, maximum strip, forbidden points]
For a point $(x,y)$, let $d((x,y))$ be the signed horizontal distance from $(x,y)$ to the line $y=x/k$:
\eqn{d((x,y)) = ky - x.}

Let \emph{the strip} be those points $p \not\in \set{(0,0),(kn-i,n)}$ such that $0 \leq d(p) < i$, plus an arbitrary subset of the points on the upper boundary $d(p)=i$.

Any point in the strip is \emph{forbidden}. 

We call the strip \emph{maximum} if it includes all points on the interior of its upper boundary.

We call the strip \emph{minimum} if it includes none of the points on the upper boundary.
\end{definition}

\begin{figure}[H]
\centering
\latticepathstrip{0.63\textwidth}{0.8mm}{4}{1}{3}{2}{R,R,R,R,R,U,U,U,R,R,R,R,R}{}{\fill (6,2) circle (3pt); \fill (2,1) circle (3pt); \fill[white] (2,1) circle (2.5pt); \fill (3,1) circle (3pt); \fill[white] (3,1) circle (2.5pt); \fill (4,1) circle (3pt); \fill[white] (4,1) circle (2.5pt); \fill (7,2) circle (3pt); \fill[white] (7,2) circle (2.5pt); \fill (8,2) circle (3pt); \fill[white] (8,2) circle (2.5pt);}
\caption{An example of our setting with $(k,n,i)=(4,3,2)$.  The point $(2,1)$ is forbidden and $(6,2)$ is not; therefore, this strip is neither maximum nor minimum.  For this diagram only, forbidden points are indicated by empty circles for clarity.} \label{fig:StripExample1}
\end{figure}

\begin{definition}[Good paths]
A \emph{good path} is a path from $(0,0)$ to $(kn-i,n)$ that avoids all forbidden points.
\end{definition}

The main result of this section is to partition the set of good paths (Definition \ref{PartitionsDef}) and to show that the non-empty partitions have equal sizes by constructing a bijection among them (Theorem \ref{PartitionTheorem}).  We then count the number of good paths in the maximum-strip and minimum-strip cases. We finally extend the logic to count valved paths and derive formulas for related generating functions.

\begin{definition}[Partitions $0$, $1$, \ldots, $k$] \label{PartitionsDef}
For each path, let $p$ be the first lattice point on the path after $(0,0)$ that satisfies $d(p)\geq0$; that is, $p$ is the first point after the start that is weakly above the line $y=x/k$.  Then the path is assigned to Partition $d(p)$.
\end{definition}
For example, the path in Figure \ref{fig:StripExample1} is in Partition 3. In general, a path that is always below the diagonal except for its endpoints will have $p$ be its last point, thus being in Partition $i$, while a path whose first step is upward will have $p = (0,1)$, thus being in Partition $k$. 

These definitions generalize the notions of minimal bridge and minimal excursion.  To see this, note that the maximum strip with $i=0$ is the set of lattice points that are in the interior of the diagonal from $(0,0)$ to $(kn,n)$. Thus, in this case, the set of good paths is the set of minimal bridges $\widetilde{\mathcal{B}}_k$, and Partition $k$ is the set of minimal excursions $\widetilde{\mathcal{E}}_k$; see Figure \ref{fig:i0example}.

\begin{figure}[H]
\centering
\latticepath{0.4\textwidth}{0.8mm}{3}{1}{2}{U,R,R,U,R,R,R,R}{}{}
\caption{A good path in Partition $k$, in the case $(k,n,i)=(3,2,0)$ with maximum strip.  This is a minimal bridge and, in fact, a minimal excursion. All paths in Partition $k$ are minimal excursions, because the forbidden points prevent the path from crossing from above the diagonal to below.} \label{fig:i0example}
\end{figure}

\begin{theorem} \label{PartitionTheorem}
For a given choice of $k$, $n$, and $i$, the non-empty partitions are $i$, $i+1$, \ldots, and $k$, and they all have the same size.
\end{theorem}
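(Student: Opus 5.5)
The plan is to show that the first-visit point $p$ always satisfies $i\le d(p)\le k$. I will then build, for each $j$ with $i\le j<k$, an explicit bijection between Partition $j$ and Partition $k$ by a half-turn rotation of an initial segment. Finally I will check that Partition $k$ is non-empty.

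\emph{Step 1: range of $d(p)$.} A right step changes $d$ by $-1$ and an up step changes it by $+k$.
\begin{itemize}
\item If the first step is up, then $p=(0,1)$ and $d(p)=k$.
\item Otherwise all points strictly between $(0,0)$ and $p$ have $d<0$, and $p$ is entered by an up step, since a right step cannot raise $d$. Hence $d(p)\in\{d(q)+k : -k\le d(q)\le -1\}$, so $0\le d(p)\le k-1$.
\item The endpoint $(kn-i,n)$ has $d=i$. So if $d(p)<i$, then $p$ is not the endpoint, and $p$ lies in the strip and is forbidden.
\end{itemize}
Therefore only Partitions $i,\dots,k$ can be non-empty. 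Partition $k$ is non-empty: the path consisting of all up steps followed by all right steps has $d$ increase to $kn$ and then decrease by $1$ at a time down to $i$. It therefore meets $d\le i$ only at its endpoint, so it avoids all forbidden points.

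\emph{Step 2: the rotation map.} Fix $j$ with $i\le j<k$. Write a path in Partition $j$ as $A\,U\,B$, where:
\begin{itemize}
\item $A$ runs from $(0,0)$ to a point $q$ with $d(q)=j-k$, and every point of $A$ after the origin has $d<0$;
\item $U$ is the up step from $q$ to $p$;
\item $B$ is the remainder, from $p$ to the endpoint.
\end{itemize}
Rotate the segment $A\,U$ by a half turn, which reverses its step sequence, to obtain $U\,\hat A$, where $\hat A$ is $A$ with its steps in reverse order. Define $\Phi(A\,U\,B)=U\,\hat A\,B$.

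The step multiset of $A\,U$ is unchanged, so $U\,\hat A$ still ends at $p$ and $B$ is untouched. The new path starts with an up step, to $d=k$. A point of $\hat A$ after $m$ steps has
\[
d \;=\; k+(j-k)-d(a) \;=\; j-d(a),
\]
where $a$ is the point of $A$ reached after $|A|-m$ steps. When $a$ is an interior point of $A$ or $q$, we have $d(a)<0$, so $d>j\ge i$ and the point is not forbidden. When $a$ is the origin, the point is $p$ itself, with $d=j$. Hence $\Phi$ lands in the good paths of Partition $k$.

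Moreover, in the image, $p$ is the first point after the origin with $d\le j$, and all earlier points after the origin have $d>j$.

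\emph{Step 3: the inverse.} Given a path $U\,C$ in Partition $k$, its $d$-values start at $k>j$ and end at $i\le j$. Right steps decrease $d$ by exactly $1$, so the path must pass through some point with $d=j$ exactly. Let $p'$ be the first such point, and write $U\,C = U\,\hat A\,B$ with $\hat A$ ending at $p'$. Reverse $U\,\hat A$ to get $A\,U$. The same computation as in Step 2, run backwards, shows that the interior points of $A$ and the point $q$ all have $d<0$. So the first point weakly above the diagonal is $p'$, with $d(p')=j$. Also, $p'$ and $B$ are the same in both paths, so their forbidden status is unchanged. This covers the delicate case $j=i$, where $p'$ might lie on the boundary $d=i$; it is allowed in one path exactly when it is allowed in the other. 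The two maps are mutually inverse, which gives $|\text{Partition } j|=|\text{Partition } k|$ for all $i\le j\le k$.

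The main obstacle is confirming that the rotation exactly swaps the two conditions: "interior of $A$ strictly below the line" on one side and "interior of $\hat A$ strictly above level $j$" on the other. I also need to make sure that the inverse's cut point, the first visit to $d=j$, is well defined. This relies on right steps decreasing $d$ by exactly $1$, so a descent from above $j$ to $i\le j$ cannot skip level $j$.
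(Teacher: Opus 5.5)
Your proof is correct and follows essentially the same argument as the paper. Both map Partition $j$ to Partition $k$ by reversing (half-turn rotating) the initial segment up to the first point weakly above the line. Both invert by cutting at the first point with $d=j$, which exists because right steps lower $d$ by exactly $1$. Your explicit checks that $d(p)\le k-1$ when the path starts with a right step, and that Partition $k$ is non-empty via the all-up-then-all-right path, supply details the paper leaves implicit.
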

Note that the theorem holds regardless of which points are included on the upper boundary of the strip.
\begin{proof}
Since an up-step increases the distance $d$ by $k$, we immediately see that only Partitions $0$, $1$, \ldots, $k$ can be non-empty. Furthermore, the strip removes Partitions $0$ through $i-1$.

We now describe the map from Partition $c$ to Partition $k$, with $i \leq c < k$.  For a good path $\ell$ in Partition $c$, let $p(\ell)$ be the first point on the path that is weakly above $y=x/k$, so that $c=d(p(\ell))$. Reversing the sequence of steps up to $p(\ell)$ yields a path in Partition $k$, as shown in Figure \ref{ValvedExample2}.  Geometrically, this reversal can be thought of as a half-turn rotation of the path up to $p(\ell)$.  For the inverse map, let $\ell$ be in Partition $k$; $p(\ell)$ is the first point after the start for which the horizontal distance to $y=x/k$ is $c$. Rotating the path up to $p(\ell)$ returns a path in Partition $c$.

\begin{figure}[H]
\centering
\latticepathstrip{0.75\textwidth}{0.5mm}{4}{1}{3}{1}{R,R,R,R,R,U,R,U,U,R,R,R,R,R}{}{
    \draw[line width=1mm, dashed] (0,0) -- (0,1) -- (1,1) -- (1,2) -- (6,2) -- (6,3) -- (11,3);
    \draw[ultra thick, dashed, blue] (0,0.5) -- (6,2);
    \draw[ultra thick, blue] (0,0) -- (6,1.5);
    \fill (7,2) circle (3pt);
}
\caption{A good path in Partition $c=2$ (solid) mapped into Partition $k=4$ (dashed).  Here we have $(k,n,i)=(4,3,1)$.  The blue, diagonal solid and dashed segments are the images of each other under the rotation.  The dashed line is at horizontal distance $c=2$ from $y=x/k$; where the original path is below the solid line, the reversed path is above the dashed segment.} \label{ValvedExample2}
\end{figure}

We now justify that this is a bijection between Partitions $c$ and $k$.  For $\ell$ in Partition $c$, all points $q$ before $p(\ell)$ satisfy $d(q) < 0$.  In Figure \ref{ValvedExample2}, these are below the solid blue diagonal line.  After the reversal, $q$ gets sent to a new point $r$ satisfying $d(r) = c - d(q) > c \ge i$.  In Figure \ref{ValvedExample2}, such points are mapped above the dashed diagonal line. Thus, all points up to $p(\ell)$ will be above the strip, so this is indeed a path in Partition $k$.

Conversely, for $\ell$ in Partition $k$, since the distance must decrease from an initial distance of $k$ to a final distance of $i$, and since decrements happen only in steps of $1$, there will exist a point on the path of distance $c$; call the first such point $p(\ell)$. It is easy to see that these maps are inverses of each other.
\end{proof}

\begin{corollary}
The number of good paths is $k+1-i$ times the number of good paths that start with an up-step.
\end{corollary}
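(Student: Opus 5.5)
The corollary follows directly from Theorem \ref{PartitionTheorem}. The plan has two parts: identify Partition $k$ with the set of good paths that start with an up-step, and then count partitions.

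First, every good path lies in exactly one partition. The path ends at $(kn-i,n)$, which has $d = kn-(kn-i) = i \geq 0$, so a first point $p$ after the origin with $d(p)\ge 0$ always exists. By the opening argument of the proof of Theorem \ref{PartitionTheorem}, $d(p)\in\{0,\dots,k\}$. Indeed, the point just before $p$ has negative $d$ (or is the origin, where $d=0$). A single step changes $d$ by $-1$ (right step) or by $+k$ (up step), so $d(p)\le k$.

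Next, I would show that Partition $k$ is exactly the set of good paths whose first step is up.
\begin{itemize}
\item If the first step is up, then $p=(0,1)$ and $d(p)=k$.
\item If the first step is right, the path enters the region $d<0$. To reach $d(p)=k$ it would have to arrive at $p$ by an up step from a point $q$ with $d(q)=0$. But $q$ comes after the origin, so it would itself be a point with $d\ge 0$ preceding $p$, contradicting the choice of $p$.
\end{itemize}
The one subtlety is the degenerate case where the endpoint is reached directly, but that endpoint has $d=i<k$, so it is harmless.

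Finally, Theorem \ref{PartitionTheorem} says the nonempty partitions are $i,i+1,\dots,k$, which is $k+1-i$ of them, and that they all have the same size as Partition $k$. Summing over partitions gives
\[
\#\{\text{good paths}\} = (k+1-i)\cdot\#\{\text{good paths starting with an up-step}\}.
\]

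The only step needing any care is the identification of Partition $k$ with up-first paths. Everything else is bookkeeping on top of the theorem.
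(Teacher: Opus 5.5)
Your proposal is correct and follows the paper's route: the corollary is read off from Theorem \ref{PartitionTheorem}, which gives $k+1-i$ nonempty partitions of equal size, once Partition $k$ is identified with the good paths whose first step is up. Your check that a right-first path cannot land in Partition $k$ (its predecessor point has $d<0$, so $d(p)\le k-1$) makes explicit a direction the paper only states for up-first paths.
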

\begin{remark}
The $i=0$ case of this corollary can also be obtained using inclusion-exclusion and the identity $$\dbinom{(k+1)j}{j}=\dbinom{(k+1)j-1}{j-1}\cdot(k+1).$$
\end{remark}

We are now ready to count the good paths:

\begin{theorem} \label{goodpaths}
For $0 \leq i < k$, the number of good paths in the maximum-strip case is $$\displaystyle \frac{n(k-i)(k+1-i)}{(kn+n-1-i)(kn+n-i)} \binom{kn+n-i}{n}.$$
\end{theorem}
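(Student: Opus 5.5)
By the preceding corollary, the number of good paths is $k+1-i$ times the size of Partition $k$, the good paths whose first step is up. So the plan is to count Partition $k$ in the maximum-strip case and then multiply by $k+1-i$.

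\emph{Confining Partition $k$ to the region above the strip.} Take a path in Partition $k$. It starts with an up-step to $(0,1)$, where $d=k>i$. Along the path, $d$ rises by $k$ on up-steps and falls by $1$ on right-steps. Hence the path can only reach a point with $d\le i$ by passing through every value $i, i-1, \ldots$ one at a time. In the maximum strip every interior lattice point with $0\le d\le i$ is forbidden, so this cannot happen before the endpoint $(kn-i,n)$, where $d=i$. Consequently:
\begin{itemize}
\item every point of the path strictly between the start and the end satisfies $d\ge i+1$;
\item the last step is a right-step from $(kn-i-1,n)$, where $d=i+1$.
\end{itemize}
Conversely, any path from $(0,1)$ to $(kn-i-1,n)$ staying in $d\ge i+1$, prefixed by an up-step and followed by a right-step, is a good path in Partition $k$. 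So Partition $k$ is in bijection with such paths.

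\emph{Reformulating as a walk.} Set $h=d-(i+1)$. Each path above becomes a walk with steps $+k$ (one per up-step) and $-1$ (one per right-step). It has $n-1$ steps of $+k$ and $kn-i-1$ steps of $-1$, so $m=kn+n-i-2$ steps in total. It starts at height $k-i-1$, ends at height $0$, and never goes below $0$. Appending one extra $-1$ step turns these bijectively into walks of $m+1$ steps from height $k-i-1$ that first reach $-1$ at the final step.

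\emph{Counting first-passage walks.} By the cycle lemma in its hitting-time form (Kemperman's formula), the number of such walks is
\[
\frac{k-i}{m+1}\binom{m+1}{n-1}=\frac{k-i}{kn+n-i-1}\binom{kn+n-i-1}{n-1}.
\]
This is exactly the $k$-Catalan argument already cited in the introduction, but with starting height $k-i-1$ in place of $0$. Using $\binom{N-1}{n-1}=\frac{n}{N}\binom{N}{n}$ with $N=kn+n-i$, and multiplying by $k+1-i$, gives
\[
\frac{n(k-i)(k+1-i)}{(kn+n-1-i)(kn+n-i)}\binom{kn+n-i}{n},
\]
which is the claimed count.

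\emph{Main obstacle.} I expect the main obstacle to be stating and justifying the cycle-lemma count cleanly for a general starting height $h_0=k-i-1$. I would argue it directly. Among the $m+1$ cyclic shifts of a $\pm$ step sequence with total sum $-(h_0+1)$, exactly $h_0+1$ shifts have their first passage below $-h_0$ occurring precisely at the end, which is Spitzer's lemma. Dividing by $m+1$, and accounting for periodicity in the standard way, then gives the formula.
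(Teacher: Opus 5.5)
Your proof is correct and is essentially the paper's argument. The paper counts Partition $i$ (paths strictly below the diagonal until a final up-step, i.e.\ weakly-below paths to $(kn-i-1,n-1)$), which is the half-turn image of your Partition $k$; it obtains the same ballot number $\frac{k-i}{kn+n-i-1}\binom{kn+n-i-1}{n-1}$ by citing the handbook theorem rather than deriving it via Kemperman's hitting-time form of the cycle lemma, and then multiplies by $k+1-i$.
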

\begin{proof}
In the maximum-strip case, Partition $i$ is the set of paths that stay below the diagonal until the final step.  Such a path begins with a rightward step and ends with an upward step.  The rest of the path is a translated copy of a path from $(0,0)$ to $(kn-i-1,n-1)$ that is weakly below the line $y=x/k$.  By \cite[Thm. 10.4.5]{encylatt}, Partition $i$ therefore contains
\eqn{\frac{k-i}{kn+n-i-1} \cdot \binom{kn+n-i-1}{n-1}}
paths.  By Theorem \ref{PartitionTheorem}, the number of good paths is $k+1-i$ times the size of Partition $i$, and the result follows immediately.
\end{proof}
\begin{corollary} \label{goodpathsminimum}
In the minimum-strip case, the number of good paths is $\dbinom{kn+n}{n}$ for $i=0$ and
\eqn{\frac{n(k+1-i)^2  }{(kn+n-i) (kn+1-i)} \cdot \binom{kn+n-i}{n}}
for $1 \leq i < k$.
\end{corollary}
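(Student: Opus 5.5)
The plan is to reduce the minimum-strip count for parameter $i$ to the maximum-strip count for parameter $i-1$, which Theorem \ref{goodpaths} already gives, and then subtract a correction term.

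First the case $i=0$. The minimum strip consists of points $p$ with $0\le d(p)<0$, of which there are none, together with none of the boundary points. So nothing is forbidden, and the good paths are all paths from $(0,0)$ to $(kn,n)$. There are $\binom{kn+n}{n}$ of them.

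Now let $1\le i<k$. In the minimum strip with parameter $i$, the forbidden points are the non-endpoint points with $0\le d\le i-1$. This is exactly the forbidden set of the maximum strip with parameter $i-1$, since that strip takes $0\le d<i-1$ and adds the whole interior of the boundary $d=i-1$. The parameter $i-1$ satisfies $0\le i-1<k$, so Theorem \ref{goodpaths} applies to it.

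The two settings differ only in their endpoints: $(kn-i,n)$ for parameter $i$ and $(kn-i+1,n)$ for parameter $i-1$. Any good path for the maximum strip with parameter $i-1$ enters $(kn-i+1,n)$ in one of two ways.
\begin{itemize}
\item It enters by a right step from $(kn-i,n)$, a point with $d=i$, which is not forbidden. Deleting this last step gives a bijection between these paths and the good paths of the minimum strip with parameter $i$, because the forbidden sets agree.
\item It enters by an up step from $(kn-i+1,n-1)$, a point with $d=i-1-k<0$.
\end{itemize}

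The step needing care is identifying the paths of the second kind. I claim they are exactly Partition $i-1$ of the maximum-strip setting with parameter $i-1$. Take such a path and suppose it visited a point with $d\ge 0$ before its endpoint. That point is not in the strip, so it has $d\ge i$. To reach $(kn-i+1,n-1)$, where $d<0$, the path must then decrease $d$ through every value from $i-1$ down to $0$. Right steps change $d$ by exactly $-1$ and up steps only increase $d$, so no value can be skipped. The path would therefore hit a forbidden point, a contradiction. Hence it stays strictly below the line until its final step, which is precisely the Partition $i-1$ description used in the proof of Theorem \ref{goodpaths}.

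Let $P$ be the size of this partition. From that proof, with $i$ replaced by $i-1$,
\eqn{P=\frac{k+1-i}{kn+n-i}\binom{kn+n-i}{n-1}.}
By Theorem \ref{PartitionTheorem}, the total number of good paths for the maximum strip with parameter $i-1$ is $(k+2-i)P$. Subtracting the paths of the second kind, the minimum-strip count for $i$ is
\eqn{(k+2-i)P-P=(k+1-i)P=\frac{(k+1-i)^2}{kn+n-i}\binom{kn+n-i}{n-1}.}
Finally, the identity $\binom{kn+n-i}{n-1}=\frac{n}{kn+1-i}\binom{kn+n-i}{n}$ converts this into the stated formula.
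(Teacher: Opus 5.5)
Your proof is correct and follows the paper's argument. You append a right step to land in the maximum-strip setting for $i-1$, observe that the image misses exactly Partition $i-1$, and discard that partition, which is the paper's factor $(k-i+1)/(k-i+2)$. You also spell out two things the paper leaves implicit: that the forbidden sets agree, and that the paths ending in an up step are exactly Partition $i-1$. In that second check, the point ``with $d\ge 0$'' must be taken strictly after the origin.
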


\begin{proof}
The case $i=0$ is trivial.  For $i > 0$, we could proceed as in Theorem \ref{goodpaths}, but it is more interesting to understand the relation between the minimum and maximum strip cases. Append a rightward step to the end of the path; the result is a good path in the maximum-strip case for $i-1$, and there are 
\eqn{\frac{n(k-(i-1))(k+1-(i-1))}{(kn+n-1-(i-1))(kn+n-(i-1))} \binom{kn+n-(i-1)}{n}}
such paths. This map is almost a bijection; however, no path in the minimum-strip case with $i$ maps into Partition $i-1$ of the maximum-strip case with $i-1$. To remove this extra partition, we multiply the above quantity by $(k-i+1)/(k-i+2)$.
\end{proof}

\begin{corollary} \label{i0paths}
The number of minimal bridges is $\widetilde{b}_k(n) = \dfrac{k}{kn+n-1} \cdot \dbinom{kn+n}{n}$.
\end{corollary}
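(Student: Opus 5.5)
This is the special case $i=0$ of Theorem \ref{goodpaths}, with the maximum strip. The plan is first to match the objects and then to simplify the formula.

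First I will confirm that, for $i=0$ and the maximum strip, the good paths are exactly the minimal bridges of size $n$. When $i=0$, the condition $0\le d(p)<0$ is empty, so every forbidden point lies on the upper boundary $d(p)=0$. That boundary is the line $y=x/k$ itself. The maximum strip takes all interior points of this boundary, i.e.\ the lattice points on the diagonal strictly between $(0,0)$ and $(kn,n)$. The endpoint is $(kn-0,n)=(kn,n)$. So a good path is a path from $(0,0)$ to $(kn,n)$ avoiding every non-endpoint diagonal lattice point, which is the definition of a minimal bridge. This was already noted after Definition \ref{PartitionsDef}.

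Next I substitute $i=0$ into Theorem \ref{goodpaths}, which gives
\eqn{\widetilde{b}_k(n)=\frac{n\,k\,(k+1)}{(kn+n-1)(kn+n)}\binom{kn+n}{n}.}
Since $kn+n=n(k+1)$, the factor $n(k+1)$ cancels against the denominator $kn+n$. This leaves $\frac{k}{kn+n-1}\binom{kn+n}{n}$, as claimed.

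As a cross-check that avoids the general formula, I can argue directly. By Theorem \ref{PartitionTheorem} with $i=0$, the good paths split into $k+1$ equal partitions. Partition $0$ consists of the paths strictly below the diagonal except at their endpoints, and these number $\frac{k}{kn+n-1}\binom{kn+n-1}{n-1}$. Multiplying by $k+1$ and using $\binom{(k+1)n}{n}=\frac{(k+1)n}{n}\binom{(k+1)n-1}{n-1}$ gives the same answer. No step here is a real obstacle; the only care needed is confirming that the maximum strip with $i=0$ excludes exactly the interior diagonal points and that $\binom{kn+n-i}{n}$ specializes correctly.
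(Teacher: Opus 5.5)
Your proof is correct and matches the paper's. The paper also just takes $i=0$ in Theorem \ref{goodpaths}, relying on the identification of maximum-strip good paths with minimal bridges noted after Definition \ref{PartitionsDef}. Your cancellation of $n(k+1)$ against $kn+n$ and your cross-check via Partition $0$ and Theorem \ref{PartitionTheorem} are both sound.
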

\begin{proof}
This follows by taking $i=0$ in Theorem \ref{goodpaths}.
\end{proof}

\begin{remark}
Theorem \ref{PartitionTheorem} and its proof still work if we allow jumps of the form $(kj,j)$.  In this case, it is also interesting to consider adding weights. Specifically, we can assign a number to each type of step and assign each path a weight that is the product of each of its steps. Then the total weight of the paths in each partition will still be the same because the bijection does not change the number of each type of step used. 
\end{remark}

\begin{definition}[Valved path]
\label{def:valve}
Let a \emph{valved path} be a path that does not cross from on or above the diagonal to below the diagonal, except possibly with the first step.  Let $v_k(n)$ be the number of valved paths from $(0,0)$ to $(kn,n)$.
\end{definition}
As seen in Figures \ref{fig:PathTypeExamples1} and \ref{fig6}, this condition is dual to the minimal bridge condition: rather than forbidding the points on the diagonal, we forbid any rightward step that starts on or crosses the diagonal.
\begin{proposition} \label{Prop:ValvedCount}
For $n \geq 1$, $\displaystyle v_k(n) = (k+1) e_k(n) = \frac{k+1}{kn+1}\binom{kn+n}{n}$.
\end{proposition}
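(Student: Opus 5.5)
We plan to rerun the partition argument of Theorem \ref{PartitionTheorem} for valved paths, now with $i=0$ and no forbidden points. Partition the valved paths from $(0,0)$ to $(kn,n)$ by Definition \ref{PartitionsDef}: let $p$ be the first lattice point after the origin with $d(p)\ge 0$, and put the path in Partition $d(p)\in\{0,1,\dots,k\}$. We will show that Partition $k$ is exactly the set of excursions $\mathcal{E}_k(n)$, and that the same half-turn rotation of the initial segment up to $p$ gives bijections between Partition $c$ and Partition $k$ for every $0\le c<k$. This yields $v_k(n)=(k+1)e_k(n)$, and the closed form then follows from the $k$-Catalan count $e_k(n)=\frac{1}{kn+1}\binom{kn+n}{n}$ (the $i=0$ case of \cite[Thm. 10.4.5]{encylatt}).

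First, Partition $k$. A path in Partition $k$ starts with an up-step, so it sits at $d=k\ge 0$ after the first step. The valve condition then forbids every step from a point with $d\ge 0$ to a point with $d<0$. Hence the path stays weakly above the diagonal throughout and is an excursion. Conversely, an excursion must begin with an up-step (unless $n=0$), and it never goes below the line, so it is valved and lies in Partition $k$. This is where the hypothesis $n\ge1$ is used.

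Next, the map from Partition $c$ to Partition $k$ for $0\le c<k$. Let $\ell$ be a valved path in Partition $c$. Up to $p(\ell)$, every point $q\ne(0,0)$ satisfies $d(q)<0$. Reversing the steps of this initial segment sends each such $q$ to a point $r$ with $d(r)=c-d(q)>c\ge0$. The endpoint $p(\ell)$ has $d=c$, and by translation invariance the tail of the path starting at $p(\ell)$ is unchanged. So the new path starts with an up-step, and its initial segment lies strictly above the line $d=c$, in particular weakly above the diagonal.

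The main check is that the new path is still valved. There are three places to look.
\begin{itemize}
\item Inside the rotated segment, no point lies below the diagonal, so no step violates the valve.
\item The tail is unchanged. In $\ell$ it begins at $p(\ell)$, which has $d\ge0$, and $\ell$ is valved, so the tail has no forbidden crossing.
\item At the junction point $p(\ell)$ nothing new happens.
\end{itemize}
So the image lies in Partition $k$.

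For the inverse, take an excursion (a path in Partition $k$) with $n\ge1$. Its $d$-value goes from $k$ down to $0$, and $d$ decreases only in unit steps, so there is a first point with $d=c$. Rotating the segment up to that point gives a path whose intermediate points all have $d<0$ and whose first point with $d\ge0$ has $d=c$.

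The obstacle we expect is confirming that this inverse image is valved. The rotated prefix lies strictly below the diagonal except at the origin. The valve condition allows the first step to go below, and only restricts steps leaving a point with $d\ge0$, so the prefix never triggers it. The tail is the tail of an excursion, so it never goes below the diagonal at all. Hence the inverse image is valved. The two maps are inverse to each other exactly as in Theorem \ref{PartitionTheorem}, so all $k+1$ partitions have size $e_k(n)$.
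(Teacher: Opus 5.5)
Your proposal is correct and is the same argument as the paper's: the paper's proof just says to reuse the rotation bijection of Theorem \ref{PartitionTheorem} (rotating up to the first point weakly above the diagonal) so that each excursion corresponds to $k+1$ valved paths. You fill in the checks the paper leaves implicit, namely that Partition $k$ equals $\mathcal{E}_k(n)$ when $n\ge 1$ and that the rotation preserves the valve condition in both directions; the one step worth stating explicitly is that in the inverse map every point of the excursion before its first visit to $d=c$ has $d>c$, which follows from the unit decrements starting at $d=k$.
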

\begin{proof}
Using the same bijection (rotate the path up to the first point that is on or above the diagonal), we can show that there are $k+1$ valved paths for every (not-necessarily minimal) excursion.
\end{proof}

We can now complete the square in Figure \ref{fig:square}.  The top edge is completed with the following relation between $B_k$ (the OGF of binomial coefficients $\binom{kn+n}{n}$) and $E_k$ (the OGF of the $k$-Catalan numbers $\frac{1}{kn+1}\binom{kn+n}{n}$): 
\begin{corollary} \label{ben3}
$B_k(x)=\dfrac{E_k(x)}{k+1-k\cdot E_k(x)}$.
\end{corollary}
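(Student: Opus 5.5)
The plan is to obtain the identity from two decompositions that both use the diagonal $y=x/k$: a first-return decomposition of bridges, and the valved-path count of Proposition \ref{Prop:ValvedCount}. Throughout, the generating functions include the empty path, so $B_k(0)=E_k(0)=1$.

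First, decompose an arbitrary bridge. A nonempty bridge is cut at its first return to the diagonal after the origin. The first piece is a minimal bridge, and the remainder is an arbitrary bridge. Iterating this gives $B_k=1/(1-\widetilde B_k)$, which is the SEQ arrow on the left of the square. The same cut applied to excursions gives $E_k=1/(1-\widetilde E_k)$.

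Second, compare minimal bridges with minimal excursions. By Theorem \ref{PartitionTheorem} with $i=0$ and maximum strip, $\widetilde b_k(n)=(k+1)\widetilde e_k(n)$ for $n\ge1$. Hence $\widetilde B_k=(k+1)\widetilde E_k$, and therefore $\widetilde B_k=(k+1)(1-1/E_k)$. Substituting this into $B_k=1/(1-\widetilde B_k)$ gives
\[
B_k=\frac{1}{1-(k+1)+(k+1)/E_k}=\frac{E_k}{k+1-kE_k},
\]
which is the claimed relation.

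As a cross-check, I would also run the argument through valved paths, since that appears to be the route the authors intend. A valved path also decomposes at its diagonal visits, but the pieces after the first return must start with an up step. This is because a right step leaving a diagonal point crosses below the line, which the valve condition forbids after the first step. So each later piece is a minimal excursion, while the first piece is any minimal bridge. That would give $V_k=1+\widetilde B_k/(1-\widetilde E_k)$. Proposition \ref{Prop:ValvedCount} gives $V_k=(k+1)E_k-k$, and comparing the two expressions should reproduce the same formula.

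The main obstacle is bookkeeping rather than the combinatorics: keeping the convention that each OGF includes its constant term $1$, and verifying that the first-return splits really are bijections. For the latter, I need to check that a minimal bridge of size $n$ ends exactly at $(kn,n)$, and that the remainder of a bridge after its first return is again a bridge. Once these are checked, the final step is a one-line algebraic substitution.
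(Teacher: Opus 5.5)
Your proof is correct, and its main line is exactly the paper's second derivation: the diagram-chase around the square using $\mathcal{B}_k=\seq(\widetilde{\mathcal{B}}_k)$, $\mathcal{E}_k=\seq(\widetilde{\mathcal{E}}_k)$ and $\widetilde{B}_k=(k+1)\widetilde{E}_k$ from Theorem~\ref{PartitionTheorem} with $i=0$. The paper's primary derivation uses valved paths differently from your cross-check: it writes $\mathcal{B}_k=\mathcal{E}_k\times\seq(\mathcal{R}_k)$, with $\mathcal{R}_k$ the valved non-excursions and $R_k=k(E_k-1)$, whereas your valid decomposition $V_k=1+\widetilde{B}_k/(1-\widetilde{E}_k)$ in effect re-derives $\widetilde{B}_k=(k+1)\widetilde{E}_k$ from the valved count.
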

\begin{proof}
We derive this in two ways.

Let $\mathcal{R}_k(n) = \mathcal{V}_k(n) \setminus \mathcal{E}_k(n)$ be the set of all valved paths $(0,0) \rightarrow (kn,n)$ that are not excursions.  Then by Proposition \ref{Prop:ValvedCount},
\eqn{R_k(x) = k \cdot (E_k(x) - 1).}
The $-1$ removes the trivial path, which is an excursion.

An arbitrary bridge may be decomposed into an excursion followed by a sequence of $\mathcal{R}$-type paths by splitting it at the places where it crosses the diagonal from left to right.  (If the first step of the path is to the right, then the excursion is the trivial path.)  In symbols, $\mathcal{B}_k = \mathcal{E}_k \times \seq(\mathcal{R}_k)$.  This Corollary then follows immediately.\footnote{Recall that in general, if $\mathcal{X}=\seq(\mathcal{Y})$, then their generating functions satisfy $X=1/(1-Y)$.} Note that this decomposition is dual to the decomposition of a bridge into minimal bridges, since we partition by edges rather than points.

We can also derive this result using Corollary \ref{i0paths} and some diagram-chasing in Figure \ref{fig:square}, obtaining the top edge by composing the other three edges (recall our symbology in Table \ref{SymbolTable}): the vertical sides of the diagram are $\mathcal{B}_k = \seq(\widetilde{\mathcal{B}}_k)$ and $\mathcal{E}_k = \seq(\widetilde{\mathcal{E}}_k)$, and our bijection establishes $\widetilde{B}_k(x) = (k+1) \cdot \widetilde{E}_k(x)$.
\end{proof}

\begin{corollary} \label{ben1}
$\widetilde{B}_k(x) \cdot (k+1-\widetilde{B}_k(x))^k = (k+1)^{k+1} \cdot x$.
\end{corollary}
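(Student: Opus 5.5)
We derive the functional equation from the two facts already in hand: the bijection of Theorem \ref{PartitionTheorem} (with $i=0$ and maximum strip), which gives $\widetilde{B}_k(x) = (k+1)\,\widetilde{E}_k(x)$, and the sequence decomposition $\mathcal{E}_k = \seq(\widetilde{\mathcal{E}}_k)$. The latter means $E_k(x) = 1/(1-\widetilde{E}_k(x))$, equivalently $\widetilde{E}_k(x) = 1 - 1/E_k(x)$. Combining the two,
\[
\widetilde{B}_k(x) = (k+1)\left(1 - \frac{1}{E_k(x)}\right), \qquad\text{so}\qquad k+1-\widetilde{B}_k(x) = \frac{k+1}{E_k(x)}.
\]

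The second ingredient is the classical functional equation for the $k$-Catalan generating function, namely $E_k(x) = 1 + x\,E_k(x)^{k+1}$. It follows from the standard first-return decomposition of excursions to $(kn,n)$, or equally from Lagrange inversion applied to the coefficients $\frac{1}{kn+1}\binom{kn+n}{n}$, and I would cite it as known. It can be rewritten as $E_k - 1 = x E_k^{k+1}$.

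Now substitute. We have
\[
\widetilde{B}_k\,(k+1-\widetilde{B}_k)^k = (k+1)\,\frac{E_k-1}{E_k}\cdot\frac{(k+1)^k}{E_k^k} = (k+1)^{k+1}\,\frac{E_k-1}{E_k^{k+1}} = (k+1)^{k+1}\,x,
\]
which is the claim.

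The only step that needs care is justifying the relation $\widetilde{B}_k = (k+1)\widetilde{E}_k$ at the level of generating functions. This requires that the partition bijection hold for every $n\ge 1$ with $i=0$, that Partition $k$ be exactly $\widetilde{\mathcal{E}}_k(n)$ (as noted before Theorem \ref{PartitionTheorem}), and that the constant terms match. Both $\widetilde{B}_k$ and $\widetilde{E}_k$ exclude the trivial path, so both have constant term $0$ and the relation holds coefficientwise. Everything else is algebra.

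As a sanity check, I would compare with Corollary \ref{i0paths} at $n=1$. Extracting the linear coefficient gives $\widetilde{b}_k(1)(k+1)^k = (k+1)^{k+1}$, so $\widetilde{b}_k(1) = k+1$, which matches $\frac{k}{k}\binom{k+1}{1}$.
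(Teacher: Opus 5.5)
Your proof is correct and is essentially the paper's argument. Both combine $E_k = 1 + xE_k^{k+1}$, $\mathcal{E}_k = \seq(\widetilde{\mathcal{E}}_k)$ and $\widetilde{B}_k = (k+1)\widetilde{E}_k$; the paper first eliminates $E_k$ to get $\widetilde{E}_k(1-\widetilde{E}_k)^k = x$ and then rescales, while you eliminate $\widetilde{E}_k$ first, which is the same computation in a different order.
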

\begin{proof}
By \cite[Theorem 10.4.5, second proof]{encylatt}, we have
$E_k(x) = 1 + x \cdot E_k(x)^{k+1}$. From this and $\mathcal{E}_k = \seq(\widetilde{\mathcal{E}}_k)$, we have $\widetilde{E}_k(x) \cdot (1 - \widetilde{E}_k(x))^k = x$.  The Corollary then follows immediately by $\widetilde{B}_k(x) = (k+1) \cdot \widetilde{E}_k(x)$.
\end{proof}
\section{Valves and minimal Duchon paths} \label{generalization}
So far, we have counted minimal bridges and their generalizations with respect to a line with integer (or reciprocal-integer) slope, meaning $k\in \mathbb{Z}^+$. With $k\in \mathbb{Q}^+$, the proof of Theorem \ref{goodpaths} breaks because it relies on the distance function $d$ taking integer values, whereas here, $d$ can return non-integer rationals. In particular, Partition $k$ no longer maps into all of the other partitions. See Figure \ref{fig6} for an example.

It turns out that we can get the bijection to work just enough to derive a formula relating the number of minimal valved bridges to minimal excursions in the $k=3/2$ case. (Recall the valve condition from Definition \ref{def:valve}.) Minimal excursions with $k=3/2$ will be called \textit{minimal Duchon paths} (\seqnum{A293946}), since they are the minimal version of standard Duchon paths (\seqnum{A060941}).

\begin{figure}[H]
\centering
{\tabulinesep=2mm
\begin{tabu}{cccc}
{\latticepath{0.4\textwidth}{0.8mm}{3}{2}{2}{R,R,U,U,U,R,U,R,R,R}{}{\draw[thick, decorate, decoration={crosses, segment length = 6pt}, red] (1,1) -- (2,1) (3,2) -- (4,2) (4,3) -- (5,3); \fill (1,1) circle (3pt);}} & & & {\latticepath{0.4\textwidth}{0.8mm}{3}{2}{2}{U,U,R,R,U,R,U,R,R,R}{}{\fill[white] (0.1,-0.15) -- (6.2,-0.15) -- (6.2,4) -- cycle;}} \\
A minimal valved path. & & & A minimal Duchon path. \\
\end{tabu}}
\caption{On the left, we have a minimal valved path.  The minimality criterion has the effect of excluding $(3,2)$, while the valve condition is equivalent to forbidding the use of the X-ed edges.  On the right, we have a minimal Duchon path.  It is in Partition $k=3/2$, but it does not map into Partition 1/2 because there are no points on the path whose horizontal distance to the diagonal is 1/2.} \label{fig6}
\end{figure}

We will use the following lemma:

\begin{lemma}[\!\!{\cite[Theorem 1.1]{NakamigawaTokushige2011}}] \label{lemma23}
Consider a line $y=\alpha x$ where $\alpha$ is some real number. Let $P$ be a path starting at the origin and ending at some lattice point $(i,j)$, and assume $P$ stays strictly above the line (excluding the origin). Let $d=j-\alpha i$ be the horizontal distance from the line to the endpoint $(i,j)$. Define a function $\delta(P)$ that returns the minimum horizontal distance between the line and a lattice point on the path (excluding the origin). Summing $\delta$ over all such paths yields
\eqn{\sum_{P} \delta(P) = \frac{d}{i+j} \cdot \binom{i+j}{j}.}
\end{lemma}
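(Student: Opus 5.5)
We prove this lemma directly, by the cycle lemma in the style of Spitzer. Let $N = i+j$ and fix a step word $w = s_1 s_2 \cdots s_N$ containing $i$ right steps and $j$ up steps. Define the weight of a step as $-\alpha$ for a right step and $+1$ for an up step, so that the partial sums $S_m = \sum_{t\le m} \mathrm{wt}(s_t)$ equal the horizontal distance $d(q)$ of the $m$th lattice point $q$. In particular $S_N = d$, and we assume $d>0$, since otherwise no path stays strictly above the line and both sides vanish. Consider the $N$ cyclic shifts $w^{(r)} = s_{r+1}\cdots s_N s_1 \cdots s_r$. We will show
\eqn{\sum_{r=0}^{N-1} \delta\bigl(w^{(r)}\bigr)\,\mathbf{1}\bigl[w^{(r)} \text{ stays strictly above}\bigr] = d,}
where each shift is counted with multiplicity, so periodic words are handled automatically. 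Summing this identity over all $\binom{N}{j}$ words and dividing by $N$ (each path appears exactly $N$ times as a (word, shift) pair) gives $\frac{d}{N}\binom{N}{j}$, as claimed.

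To establish the per-word identity, extend $S$ periodically by $S_{m+N} = S_m + d$; this is the standard bi-infinite walk. The shift $w^{(r)}$ stays strictly above the line exactly when $S_m > S_r$ for all $r < m \le r+N$. Since $S_{m+N} = S_m + d > S_m$, this is equivalent to $S_m > S_r$ for all $m > r$, i.e. $r$ is a strict ``record from the future''. For such an $r$ we have
\eqn{\delta\bigl(w^{(r)}\bigr) = \min_{r<m\le r+N} (S_m - S_r) = \min_{m>r} S_m - S_r.}
Now list, within one period, the indices $r_1 < r_2 < \dots < r_t$ that have this property. For consecutive indices, $\min_{m>r_a} S_m$ is attained at the next index $r_{a+1}$, and for the last index it is attained at $r_1 + N$. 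This holds because the minimum over $m>r$ is achieved at the first position where the future minimum is strict, which is precisely the next good index. Consequently the values $\delta$ are the successive gaps $S_{r_{a+1}} - S_{r_a}$, which telescope around one period to $S_{r_1+N} - S_{r_1} = d$.

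The main obstacle is justifying that the minimizer of $S$ beyond a good index $r$ is itself a good index. This requires care about ties when $\alpha$ is rational, since equal values of $S$ can then occur. The fix is to choose the \emph{last} position $m$ attaining $\min_{m' > r} S_{m'}$. This last position exists because $S\to\infty$. By construction every $m' > m$ satisfies $S_{m'} > S_m$, so $m$ is good; and no good index can lie strictly between $r$ and $m$, because any index in that range has $S \ge S_m$ ahead of it. This makes the telescoping rigorous and completes the proof.
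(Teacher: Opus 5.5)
The paper does not prove this lemma. It imports it as a black box from Nakamigawa and Tokushige, so your argument is an independent, self-contained route, and it is correct.

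In effect you prove a weighted cycle lemma. Instead of counting the good rotations of a word, you show that for each good index $r_a$ the value $\delta$ equals the gap $S_{r_{a+1}}-S_{r_a}$ to the next good index of the bi-infinite walk, so these values telescope to $d$. The double count over (word, shift) pairs then gives $\frac{d}{i+j}\binom{i+j}{j}$, with periodic words handled automatically.

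Taking the \emph{last} minimizer of $S$ beyond $r$ is the right way to handle ties. This matters, because the paper applies the lemma with rational slope (line $y=2x/3$, endpoint $(3n-1,2n)$, $\delta\in\{1/2,1\}$).

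Your version has two side benefits. It uses nothing about the weights except additivity along steps, so it applies verbatim to the horizontal distance $\frac{3}{2}y-x$ that the paper actually uses there; the lemma's $y-\alpha x$ differs from it only by a positive factor. And with up-step weight $1$ and right-step weight $-k$ for a positive integer $k$, one has $\delta\equiv 1$, so the classical cycle-lemma count falls out as a special case. The citation buys brevity; your proof buys self-containedness and exposes the implicit hypotheses.

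Two small repairs are needed.

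First, the telescoping gives $d$ only if each period contains at least one good index; if $t=0$ the sum is empty. Your own last-minimizer argument supplies this: the proof that the last position attaining $\min_{m'>r}S_{m'}$ is good never uses that $r$ is good, so apply it with $r=-1$. You also tacitly use that goodness is $N$-periodic, so that the successor of $r_t$ is $r_1+N$. This is immediate from $S_{m+N}-S_{r+N}=S_m-S_r$, but should be said.

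Second, your reduction to $d>0$ is misstated. For $d<0$ the left side is an empty sum, but $\frac{d}{i+j}\binom{i+j}{j}$ is negative, so the two sides do not both vanish. The lemma as transcribed is false for endpoints strictly below the line and must be read with the implicit hypothesis $d\ge 0$, the case $d=0$ being trivial. State it that way rather than calling the case vacuous.
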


Next, let $\widetilde{V}(n)$ and $\widetilde{D}(n)$ be the number of valved minimal bridges and minimal Duchon paths, respectively, from the origin to $(3n,2n)$. 
\begin{theorem}
The number of valved minimal bridges is $\displaystyle \widetilde{V}(n)=6\widetilde{D}(n) - \frac{4}{5n-1}\cdot\binom{5n-1}{2n}$.
\end{theorem}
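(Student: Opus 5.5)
The plan is to rerun the partition argument of Theorem~\ref{PartitionTheorem} with the integer-valued distance $D(q)=3y-2x$, which is $2d$ for $k=3/2$. An up step adds $3$ to $D$ and a right step subtracts $2$. A valved minimal bridge to $(3n,2n)$ is assigned to a partition according to the value $D(p)\in\{0,1,2,3\}$ of its first point $p$ after the origin with $D(p)\ge 0$. The value $D(p)=0$ occurs only when $p$ is the endpoint.

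First I record what happens after $p$. From $D=1$ a right step reaches $-1$, which the valve condition forbids. From $D=2$ a right step reaches $0$, which minimality forbids. Hence the rest of the path after $p$ stays strictly above the line, with $D\ge1$ until the end. The final step always goes right from a point with $D=2$.

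The four partitions are then counted as follows.
\begin{itemize}
\item Partition $3$ (first step up) is exactly the set of minimal Duchon paths, so it has $\widetilde D(n)$ elements.
\item Partition $0$ consists of the paths strictly below the line except at their endpoints. The half-turn rotation of the whole path, $q\mapsto(3n,2n)-q$, sends $D\mapsto -D$, so this partition also has $\widetilde D(n)$ elements.
\item For Partitions $c=1,2$, I use the prefix rotation from Theorem~\ref{PartitionTheorem}, reversing the steps up to $p$. It sends points with $D<0$ to points with $D>c$. So Partition $c$ is in bijection with the minimal Duchon paths whose prefix up to their first point with $D=c$ has all earlier points (other than the origin) satisfying $D>c$.
\end{itemize}
For $c=1$ this condition is automatic, since $D$ can only drop to $1$ by first being above $1$. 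Thus $|P_1|=h_1$, the number of minimal Duchon paths visiting $D=1$. For $c=2$ the condition says that the first visit to $\{1,2\}$ is at $2$. Here the rational slope bites, as in Figure~\ref{fig6}: not every Duchon path qualifies.

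To evaluate $h_1$, I apply Lemma~\ref{lemma23} with $\alpha=2/3$ to the paths from the origin to $(3n-1,2n)$ that stay strictly above the line. These are exactly the minimal Duchon paths with their final right step removed, and the endpoint has $d=2/3$. The quantity $\delta$ takes the value $1/3$ on paths visiting $D=1$ and $2/3$ on the others. Writing $a_1=\widetilde D(n)-h_1$, the lemma gives
\eqn{\tfrac13 h_1+\tfrac23 a_1=\frac{2/3}{5n-1}\binom{5n-1}{2n},}
hence
\eqn{h_1 = 2\widetilde D(n)-\frac{2}{5n-1}\binom{5n-1}{2n}.}

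The total is $2\widetilde D(n)+h_1+|P_2|$, so the stated formula follows once $|P_2|=h_1$. The case $n=1$ checks: $|P_2|=h_1=1$, and the total is $6$. I expect proving $|P_2|=h_1$ to be the main obstacle.

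My first attempt is a cut-and-rotate bijection on minimal Duchon paths. Take a path that visits $D=1$ and cut it at its first such visit. The piece before the cut descends to $1$ from $3$ through values $\ge2$. Rotating that piece should convert the first visit at $1$ into a first visit at $2$ among $\{1,2\}$, and the inverse operation should cut at the first visit to $2$. If that fails, the fallback is a second application of Lemma~\ref{lemma23}, this time to the prefixes ending at the first visit to $\{1,2\}$, so that $|P_2|$ is expressed through the same binomial sum.
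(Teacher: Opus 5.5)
Your reduction is correct and matches the paper's proof: partitioning by $D(p)$, getting $|P_0|=|P_3|=\widetilde D(n)$ from the whole-path half-turn, getting $|P_1|=h_1$ from the prefix rotation, and applying Lemma~\ref{lemma23} to paths ending at $(3n-1,2n)$ to obtain $h_1=2\widetilde D(n)-\frac{2}{5n-1}\binom{5n-1}{2n}$. The gap is the step you flag yourself: $|P_2|=h_1$ is never proved, so as written you only have $\widetilde V(n)=2\widetilde D(n)+h_1+|P_2|$.

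Your cut-and-rotate sketch does not produce the paths you want. Rotating the prefix up to the first visit to $D=1$ sends its interior points, which have $D\ge 2$, to points with $D=1-D\le -1$. The result therefore lies below the line: it is a path of $P_1$, not a minimal Duchon path whose first visit to $\{1,2\}$ is at $2$. The fallback through a second use of Lemma~\ref{lemma23} is not worked out. The prefixes ending at the first visit to $\{1,2\}$ have varying endpoints, so the lemma does not directly give the needed term.

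The missing idea is a tool you already used for $P_0\leftrightarrow P_3$. The half-turn $q\mapsto(3n,2n)-q$ of the whole path is an involution on valved minimal bridges. It reverses the path and negates $D$, so a right step with $D$-values $a\to a-2$ becomes a right step $2-a\to -a$. Consequently:
\begin{itemize}
\item the valve-forbidden steps $1\to -1$ are permuted among themselves;
\item right steps out of interior diagonal points are already excluded by minimality;
\item the permitted first step $0\to -2$ out of the origin becomes the permitted final step $2\to 0$ into $(3n,2n)$, and vice versa.
\end{itemize}
You showed that a path in $P_1\cup P_2$ consists of a part with $D<0$, a single crossing up-step, and a part with $D\ge 1$. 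The rotation turns the crossing $-2\to 1$ of a path in $P_1$ into a crossing $-1\to 2$, and conversely. So it maps $P_1$ bijectively onto $P_2$, giving $|P_2|=|P_1|=h_1$ and $\widetilde V(n)=2\widetilde D(n)+2h_1$. This is exactly the paper's $\widetilde V(n)=2\widetilde D(n)+2D_1(n)$; with this step supplied, your argument coincides with the paper's proof.
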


\begin{proof}
We begin by observing that a half-turn rotation of the entire path is an involution on the set of valved minimal bridges.  The forbidden edges that cross the diagonal are simply exchanged under the half-turn, while the forbidden edges that have an endpoint on the diagonal are already banned by the minimality condition, which is again symmetric under the half-turn rotation.\footnote{This latter type of edge means that the set of all valved bridges does \textit{not} have a half-turn rotation symmetry.}

We split the set of valved minimal bridges into four partitions as in Definition \ref{PartitionsDef}: $0$, $1/2$, $1$, and $3/2$. The half-turn rotation of the entire path constitutes a bijection between Partitions $0$ and $3/2$, these being the paths strictly above and below the diagonal, respectively. Thus, each has $\widetilde{D}(n)$ paths. The same map gives a bijection between Partitions $1/2$ and $1$; define $D_1(n)$ as the number of paths in either partition. Then the number of minimal valved bridges is
\begin{equation}
\label{Mn from partitions}
\widetilde{V}(n)=2\widetilde{D}(n) + 2D_1(n).
\end{equation}
Lemma \ref{lemma23} lets us express $D_1(n)$ in terms of $\widetilde{D}(n)$. Take the endpoint in the Lemma as $(3n-1,2n)$. For a path $P$ in the Lemma, we have that $\delta(P)$ is either $1/2$ or $1$. Let $\mathcal{A}_{1/2}$ be the set of $P$ with $\delta(P)=1/2$, and similarly $\mathcal{A}_1$ for the set with $\delta(P)=1$. We will now show that $|\mathcal{A}_{1/2}|+|\mathcal{A}_1|=\widetilde{D}(n)$ and $|\mathcal{A}_{1/2}|=D_1(n)$.

Let $f(P)$ be $P$ with an extra rightward step appended at the end. Then $f$ is a bijection between $\mathcal{A}_{1/2} \cup \mathcal{A}_1$ and Partition $3/2$, yielding the first equality. Furthermore, $f(\mathcal{A}_{1/2})$ can be mapped to Partition $1/2$ by our bijection, rotating each path in $f(\mathcal{A}_{1/2})$ up to its first point of distance 1/2. This proves the second equality. Combined with Lemma \ref{lemma23}, we find
\eqn{\frac{1}{2} \abs{\mathcal{A}_{1/2}} + \abs{\mathcal{A}_1} = \frac{1}{2} \cdot D_1(n) + (\widetilde{D}(n) - D_1(n)) = \frac{1}{5n-1} \cdot \binom{5n-1}{2n}.}
Substituting into \ref{Mn from partitions} to eliminate $D_1$ yields the result.
\end{proof}

\section{Bridges with a subset of the diagonal forbidden} \label{shapirosection}

Let $a$, $b$, and $n$ be positive integers, and consider bridges from $(0,0)$ to $(an,bn)$ where the forbidden points are $(ai,bi)$ with $0<i<n$.  The case with $\gcd(a,b)=1$ yields a minimal bridge, but when $\gcd(a,b)\neq1$, only a subset of the points on the diagonal are forbidden.

Let $\mathcal{Y}_{a,b}(n)$ denote the set of paths in this new setting, and let its cardinality and OGF be $y_{a,b}(n)$ and $Y_{a,b}$, respectively.  The observation $\mathcal{B}_k = \seq(\widetilde{\mathcal{B}}_k)$ generalizes directly to this new setting,\footnote{It is interesting to observe that the same bridge can be built as a sequence of different versions of the paths described in this section. The bridges from $(0,0)$ to $(2n,2n)$ can be built out of minimal bridges (that is, bridges from $\mathcal{Y}_{1,1}$) or bridges from $\mathcal{Y}_{2,2}$; however, $\seq(\mathcal{Y}_{1,1})$ contains all bridges while $\seq(\mathcal{Y}_{2,2})$ contains only even-length bridges.} yielding
\neqn{Y_{a,b}(x) = 1 - \left( \sum_{n=0}^\infty \binom{an+bn}{bn} \cdot x^n \right)^{-1}, \label{Yab}}
which can be used to numerically calculate the elements of the sequence. This further implies
\neqn{\binom{(a+b)\cdot i}{b\cdot i}=\sum_{j=1}^i y_{a,b}(j)\cdot\binom{(a+b)\cdot(i-j)}{b\cdot(i-j)}, \label{yab}}
which also immediately follows from the observation that any bridge built from $\mathcal{Y}_{a,b}(n)$ can be decomposed into an initial piece counted by $y_{a,b}(j)$ and the rest, which is a smaller such bridge. A similar observation holds between any combinatorial class and its sequence.\footnote{Line (\ref{yab}) can also be derived by invoking the Lindstr\"{o}m--Gessel--Viennot lemma \cite{dpapp,lindstrom}, which in a special case counts lattice paths avoiding a prescribed set of points \cite[Lemma 10.7.2]{encylatt} \cite[Ex.\ 2.7.2]{stanley2012}.}

In the rest of this section, we specialize to the case of $a=b=2$; numerical investigations of other cases may be found in Appendix \ref{app:numerics}.
Let $C_n$ be the $n$\textsuperscript{th} Catalan number.  In \cite{shapiro1992}, Shapiro considered bridges to the line $y=x$ that avoid all points on the diagonal whose coordinates are odd; he concluded \cite[Example 2.5]{shapiro1992} that the number of such bridges from $(0,0)$ to $(2n,2n)$ is $C_{2n}$.  A complementary problem is to count the bridges between those points that avoid the diagonal points with \emph{even} coordinates; that is, to determine $y_{2,2}(n)$.  

\begin{figure}[H]
\centering
\latticepath{0.4\textwidth}{0.8mm}{2}{2}{4}{R,U,U,U,R,R,U,U,U,U,R,R,R,R,R,U}{}{}
\caption{A bridge in $\mathcal{Y}_{2,2}(4)$.} \label{y22figure}
\end{figure}

Note that the forbidden points of the $a=b=2$ situation may be seen as a nonmaximal strip with $i=0$; however, Theorem \ref{PartitionTheorem} does not provide much information in this case, since both partitions are just as hard to count as $y_{2,2}(n)$. A different approach is thus needed.  The result turns out to be 
\eqn{y_{2,2}(n) = C_{2n} + 4C_{2n-1} = \frac{8n + 1}{8n^2 + 2n - 1} \cdot \binom{4n}{2n},}
which we added to the On-Line Encyclopedia of Integer Sequences as $\seqnum{A337350}$ \cite{oeis}.  Two analytic proofs were initially produced in a collaboration with users on the Mathematics Stack Exchange \cite{nhan2025,user2025} by using (\ref{yab}) as a starting point, but we later found two combinatorial proofs: one is bijective, using both rotation and reflection symmetry, and the other uses the symbolic method and generating functions.  We present the bijective proof first.

To organize the proof, we introduce the following framework. Fix $n$ and choose three sets of lattice points: the initial forbidden points $F_1$, the target points $T$, and the final forbidden points $F_2$. Let $\mathcal{K}(F_1, T, F_2)$ be the set of paths from $(0,0)$ to $(2n,2n)$ that avoid $F_1$ until they hit $T$ for the first time, and then avoid $F_2$ strictly after that first hit. We also demand that such a path contains at least one point in $T$ unless $T=\emptyset$. We will write $K(F_1, T, F_2)=\abs{\mathcal{K}(F_1, T, F_2)}$. Let $E$ be the set of even diagonal points, $O$ the odd diagonal points, and $D = E \cup O$ all of the diagonal points, where all of these sets exclude endpoints. Then Shapiro showed \cite[Example 2.5]{shapiro1992} that $K(\emptyset, \emptyset, O)=C_{2n}$, and we want to find $y_{2,2}(n)=K(\emptyset, \emptyset, E)$.

We have the following lemmas.

\begin{figure}[H]
\centering
\latticepath{0.42\textwidth}{0.8mm}{6}{6}{1}{}{}{
\draw[thick] (0,0) -- (0,2) -- (1,2) -- (1,4) -- (4,4) -- (4,6) -- (6,6);
\draw[loosely dotted, ultra thick, blue] (0,0) -- (2,0) -- (2,1) -- (4,1) -- (4,6) -- (6,6);
\draw[dash pattern=on6off6, ultra thick, red] (0,0) -- (0,4) -- (2,4) -- (2,5) -- (4,5) -- (4,6) -- (6,6);
}
\caption{An example of the three-way bijection from Lemma \ref{lemma:KnullDO}, with $n=4$.  The solid path is in $\mathcal{K}(\emptyset,D,O) \setminus S$, the blue dotted path is the corresponding path in $\mathcal{S}$, and the red dashed path is the corresponding path in $\mathcal{K}(\emptyset,\emptyset,O)$.} \label{fig:Kbijections}
\end{figure}

\begin{lemma} \label{lemma:KnullDO}
$K(\emptyset,D,O) = 2C_{2n}$.
\end{lemma}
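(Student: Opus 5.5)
The plan is to split $\mathcal{K}(\emptyset,D,O)$ according to which side of the diagonal the path lies on before it first reaches $D$. Then one half is identified with Shapiro's set $\mathcal{K}(\emptyset,\emptyset,O)$, whose size $C_{2n}$ we take from \cite[Example 2.5]{shapiro1992}. This is the three-way bijection suggested by Figure~\ref{fig:Kbijections}.

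\textbf{Step 1: the factor $2$.} Every path in $\mathcal{K}(\emptyset,D,O)$ has a first interior diagonal point $p=(j,j)$ with $0<j<2n$. Before $p$ the path is strictly above or strictly below the diagonal. Let $\mathcal{S}$ be the set of those paths whose first step is to the right, i.e.\ those that approach $p$ from below. Reflect the initial segment up to $p$ in the line $y=x$, which swaps up and right steps, and keep the tail after $p$. This fixes $p$ and does not touch the condition on the tail (avoiding $O$ strictly after $p$). So it is an involution exchanging $\mathcal{S}$ with $\mathcal{K}(\emptyset,D,O)\setminus\mathcal{S}$, and $K(\emptyset,D,O)=2\abs{\mathcal{S}}$.

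\textbf{Step 2: $\abs{\mathcal{S}}=C_{2n}$.} I would build a bijection $\mathcal{S}\to\mathcal{K}(\emptyset,\emptyset,O)$ using the half-turn rotation of an initial segment, as in the proof of Theorem~\ref{PartitionTheorem}. Take a path in $\mathcal{S}$, with prefix $\pi$ strictly below the diagonal ending at $p=(j,j)$ and tail $\tau$ avoiding $O$.
\begin{itemize}
\item If $j$ is even, the whole path already avoids $O$, so it lies in $\mathcal{K}(\emptyset,\emptyset,O)$.
\item If $j$ is odd, the path fails to avoid $O$ exactly at $p$. I would try to repair this by rotating a segment around $p$: for instance, rotate the piece of the path from $p$ to the next step that changes parity. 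The aim is to move the single bad visit to $p$ off the diagonal without creating new visits to $O$.
\end{itemize}
To invert, start from a path in $\mathcal{K}(\emptyset,\emptyset,O)$. Paths whose first diagonal contact is an even point approached from below are fixed. The remaining ones are those that start upward, or never meet the diagonal from below, or first touch at an even point from above. These must be matched with the odd-$j$ members of $\mathcal{S}$ by reversing the rotation. The solid/dotted/dashed example in Figure~\ref{fig:Kbijections} should serve as a template for which segment gets rotated.

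\textbf{Main obstacle and fallback.} The hard step is making Step 2 an honest bijection: choosing which segment to rotate so that the image avoids $O$ and the inverse is well defined. If a clean bijection does not materialize, I would instead verify $\abs{\mathcal{S}}=C_{2n}$ by generating functions. First decompose at $p$:
\[
\abs{\mathcal{S}}=\sum_{j=1}^{2n-1} C_{j-1}\, g_{j}(n),
\]
where $C_{j-1}$ counts the strictly-below prefixes ending at $(j,j)$, and $g_j(n)$ counts paths from $(j,j)$ to $(2n,2n)$ avoiding $O$ strictly after the start. Next decompose $g_j(n)$ further at diagonal contacts, using minimal bridges of even or odd length according to parity. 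The $\seq$ decompositions and Corollary~\ref{ben1} with $k=1$ (where $\widetilde{B}_1(x)=1-\sqrt{1-4x}$) then give a closed form for the generating function of $\abs{\mathcal{S}}$. Comparing it with the even part of the Catalan generating function, extracted via $x\mapsto\pm\sqrt{x}$, would confirm $\abs{\mathcal{S}}=C_{2n}$ and hence $K(\emptyset,D,O)=2C_{2n}$.
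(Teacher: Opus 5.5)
Your Step 1 is correct. It differs only cosmetically from the paper, which splits $\mathcal{K}(\emptyset,D,O)$ by whether the path \emph{crosses} or merely \emph{touches} the diagonal at its first contact (steps UU/RR versus UR/RU), rather than by the side of approach. The same partial reflection gives the factor $2$ in either version.

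The gap is Step 2. ``Rotate the piece of the path from $p$ to the next step that changes parity'' is not a well-defined map. Nothing is checked about the image avoiding $O$, or about invertibility. So the bijective route does not establish $\abs{\mathcal{S}}=C_{2n}$.

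The missing idea is to let the rotation handle every path, so that parity never enters. The paper takes $\mathcal{S}$ to be the crossing paths and rotates the prefix up to the point $p$ immediately after the first contact $(j,j)$. Since $d(p)=\pm1$ and the half-turn sends $d(q)\mapsto d(p)-d(q)$, two things happen. The strictly one-sided part of the prefix lands strictly on the other side. The contact point $(j,j)$ lands at $(0,1)$ or $(1,0)$. Hence the rotated path meets no diagonal point before $p$, whatever the parity of $j$, and lies in $\mathcal{K}(\emptyset,\emptyset,O)$. The inverse rotates a Shapiro path up to its second point at distance $\pm1$. A Shapiro path must begin UU or RR to avoid $(1,1)$, so the result crosses at its first contact and lies in $\mathcal{S}$.

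Your fallback, on the other hand, does close. It is a genuinely different, generating-function route, but you stopped before the one computation that matters. Let $h(x)=xC(x)=\sum_{j\ge1}C_{j-1}x^j$ count the strictly-below prefixes, with even and odd parts $h_E,h_O$; arches then have OGF $2h$. Writing $s_n$ for $\abs{\mathcal{S}}$ at size $n$, your decomposition gives
\[
\sum_{n\ge1}s_n x^{2n}=\frac{h_E+2h_O^2}{1-2h_E}-h_E=\frac{2(h_E^2+h_O^2)}{1-2h_E}=\frac{1}{1-2h_E}-1 .
\]
Here the term $-h_E$ removes the paths that stay below the diagonal all the way to $(2n,2n)$, which is your restriction $j<2n$. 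The last equality uses $h_E=h_E^2+h_O^2$, the even part of the identity $h=x+h^2$.

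Finally, $1/(1-2h_E)$ is the OGF of sequences of even-length arches, i.e.\ of Shapiro paths, which gives $s_n=C_{2n}$. Alternatively, $1-2h_E=\tfrac12\bigl(\sqrt{1+4x}+\sqrt{1-4x}\bigr)$, whose reciprocal is $C_E(x)$ by Appendix~\ref{app:alg}. So this route does not even need Shapiro's count. What the paper's version buys instead is a bijective explanation of the count via the same half-turn rotation that drives the rest of the paper.
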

\begin{proof}
Let $\mathcal{S} \subset \mathcal{K}(\emptyset, D, O)$ be the subset of paths that cross the diagonal upon their first meeting with the diagonal. In other words, the steps adjacent to the first crossing are RR or UU. Figure \ref{fig:Kbijections} contains some examples of these sets.

We claim that the following three sets are in bijection: $\mathcal{S}$,  $\mathcal{K}(\emptyset,D,O)\setminus\mathcal{S}$, and the Shapiro paths $\mathcal{K}(\emptyset, \emptyset, O)$.  Shapiro's result then shows that each of these sets has $C_{2n}$ elements, which suffices to prove the lemma since $\mathcal{K}(\emptyset, D, O) = \mathcal{S}\sqcup (\mathcal{K}(\emptyset, D, O) \setminus \mathcal{S}).$ For the first two sets, both the bijection and its inverse reflect the path about the diagonal up to the first diagonal point. Note that this is a reflection about the diagonal, not a half-turn rotation.  See Figure \ref{fig:Kbijections} for an example. 

Meanwhile, $\mathcal{S}$ and the Shapiro paths $\mathcal{K}(\emptyset, \emptyset, O)$ are related by our rotation bijection.  To map $\mathcal{S}$ into  $\mathcal{K}(\emptyset,\emptyset,O)$, let $p$ be the first point on the path after it meets the diagonal for the first time, and rotate the path up to $p$ by a half-turn; notice that this eliminates the intersection with the diagonal before $p$.  For the inverse map, let $p$ be the \textit{second} point the path in $\mathcal{K}(\emptyset, \emptyset, O)$ has a horizontal distance of $\pm 1$ from the diagonal, and rotate the path up to $p$.  See Figure \ref{fig:Kbijections} for an example. Note that the first point where this distance is achieved will always be $(0,1)$ or $(1,0)$. This point becomes the diagonal crossing after the rotation, and since the first two steps of a path in $\mathcal{K}(\emptyset, \emptyset, O)$ are always in the same direction, the rotated path will indeed be in $\mathcal{S}$. 
\end{proof}

\begin{lemma} \label{lemma:KEOO}
$K(E,O,O) = C_{2n} + 2C_{2n-1}$.
\end{lemma}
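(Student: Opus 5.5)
The plan is to compare $\mathcal{K}(E,O,O)$ with $\mathcal{K}(\emptyset,D,O)$, whose size $2C_{2n}$ is given by Lemma \ref{lemma:KnullDO}. We then count the difference by decomposing at the first diagonal point, using minimal bridges (Corollary \ref{i0paths}) and Shapiro's count.

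\textbf{Step 1: identify $\mathcal{K}(E,O,O)$ inside $\mathcal{K}(\emptyset,D,O)$.} Take a path in $\mathcal{K}(E,O,O)$. It must hit $O$, and before its first point of $O$ it avoids $E$. So its first interior diagonal point exists and is odd, and after that point the path avoids $O$. Conversely, a path in $\mathcal{K}(\emptyset,D,O)$ whose first interior diagonal point lies in $O$ is in $\mathcal{K}(E,O,O)$. Hence $\mathcal{K}(E,O,O)$ is exactly the set of paths in $\mathcal{K}(\emptyset,D,O)$ whose first interior diagonal point is odd. Therefore
\eqn{K(E,O,O) = 2C_{2n} - N,}
where $N$ counts paths whose first interior diagonal point is some $(2j,2j)$ with $1\le j\le n-1$, and which avoid $O$ strictly afterwards.

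\textbf{Step 2: count $N$ by splitting at $(2j,2j)$.} The initial segment of such a path is a minimal bridge from $(0,0)$ to $(2j,2j)$. By Corollary \ref{i0paths} with $k=1$, there are $\frac{1}{4j-1}\binom{4j}{2j}=2C_{2j-1}$ of these. The remaining segment is a translated path from $(0,0)$ to $(2n-2j,2n-2j)$ that avoids the odd diagonal points. The translation is by an even vector, so parity of diagonal points is preserved, and by Shapiro's result there are $C_{2n-2j}$ such segments. Thus
\eqn{N=\sum_{j=1}^{n-1} 2C_{2j-1}C_{2n-2j}.}

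\textbf{Step 3: evaluate the sum.} Start from the Catalan recurrence $C_{2n}=\sum_{m=0}^{2n-1}C_mC_{2n-1-m}$. The involution $m\mapsto 2n-1-m$ swaps the parity of $m$, so the terms with odd $m$ contribute exactly $C_{2n}/2$. Writing $m=2j-1$ gives
\eqn{\sum_{j=1}^{n}C_{2j-1}C_{2n-2j}=C_{2n}/2.}
Removing the $j=n$ term $C_{2n-1}C_0$ yields $N=C_{2n}-2C_{2n-1}$. Hence
\eqn{K(E,O,O)=2C_{2n}-\left(C_{2n}-2C_{2n-1}\right)=C_{2n}+2C_{2n-1}.}

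The main point requiring care is Step 1. We must check that the endpoints are excluded from $D$, $E$ and $O$, so that paths with no interior diagonal point are absent from both sets. We must also check that the condition ``avoid $O$ strictly after the first hit'' matches exactly in the two definitions. The remaining steps are routine once the minimal-bridge count $2C_{m-1}$ and Shapiro's count are in hand.
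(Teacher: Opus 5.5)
Your proof is correct, and its outer structure matches the paper's: both write $K(E,O,O)=K(\emptyset,D,O)-K(O,E,O)$ using Lemma \ref{lemma:KnullDO}. Your $N$ is exactly $K(O,E,O)$.

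The two arguments differ only in how they evaluate $N$.

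\emph{The paper's route.} It observes that $\mathcal{K}(O,E,O)$ is simply the set of Shapiro paths $\mathcal{K}(\emptyset,\emptyset,O)$ that touch the diagonal at all. A path avoiding $O$ automatically has an even first interior diagonal point. So $N=C_{2n}-2C_{2n-1}$ follows by subtracting the $2C_{2n-1}$ arches, with no summation.

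\emph{Your route.} You apply the first-return decomposition to that same set: an arch of length $2j$ followed by a Shapiro path of size $n-j$. You then need the parity-split convolution $\sum_{j=1}^{n}C_{2j-1}C_{2n-2j}=C_{2n}/2$ to collapse the sum.

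The two computations are equivalent. Applying your decomposition to all Shapiro paths of size $n$ gives $C_{2n}=2C_{2n-1}+N$. That is the paper's complement identity, and it is also a bijective proof of your convolution identity.

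\emph{Trade-offs.} The paper's version is shorter and uses Shapiro's count only at size $n$. Yours invokes Shapiro's count at every size $n-j$, the minimal-bridge count at every even size, and an extra algebraic identity. In exchange, your version makes the structure of $\mathcal{K}(O,E,O)$ explicit.
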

\begin{proof}
The paths in $\mathcal{K}(\emptyset, D, O)$ can be partitioned into two subsets based on whether their first diagonal crossing is an even or an odd lattice point; therefore,
\eqn{K(\emptyset,D,O) = K(E,O,O) + K(O,E,O).}
The LHS is $2C_{2n}$ by Lemma \ref{lemma:KnullDO}. Therefore, it suffices to show that
\neqn{K(O,E,O) = C_{2n} - 2C_{2n-1}. \label{opaujnertfb}}
The set $\mathcal{K}(O,E,O)$ is the same as $\mathcal{K}(\emptyset,\emptyset,O)$ but without those paths that do not touch the diagonal. Therefore,
\eqn{K(O,E,O) = K(\emptyset,\emptyset,O) - K(\emptyset,\emptyset,D).}
Now $\mathcal{K}(\emptyset,\emptyset,O)$ is the set of Shapiro paths. Meanwhile, $\mathcal{K}(\emptyset,\emptyset,D)$ is the set of paths that stay strictly above or below the diagonal, so\footnote{The paths strictly below the diagonal are the Dyck paths with $2n-1$ rightward steps, plus an initial rightward step and a final upward step, and the paths above the diagonal are the half-turn rotations thereof.}
\neqn{\mathcal{K}(\emptyset,\emptyset,D)= 2C_{2n-1}. \label{non-diag}}
This proves \ref{opaujnertfb} and hence the result.
\end{proof}

\begin{theorem} \label{y22thm}
$y_{2,2}(n) = C_{2n} + 4C_{2n-1}$.
\end{theorem}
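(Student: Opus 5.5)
The plan is to split $\mathcal{K}(\emptyset,\emptyset,E)$ according to whether the path touches the diagonal at all, and to reduce the touching part to Lemma \ref{lemma:KEOO} by one more half-turn rotation.

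First, the paths in $\mathcal{K}(\emptyset,\emptyset,E)$ that meet no interior diagonal point are exactly $\mathcal{K}(\emptyset,\emptyset,D)$. By (\ref{non-diag}) there are $2C_{2n-1}$ of them.

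Every other path in $\mathcal{K}(\emptyset,\emptyset,E)$ has a first interior diagonal point. Since even diagonal points are forbidden, that point is odd. Before it, the path avoids $E$ automatically, and strictly after it the path still avoids $E$. Hence the remaining paths form exactly $\mathcal{K}(E,O,E)$, and
\eqn{y_{2,2}(n)=K(\emptyset,\emptyset,E)=K(\emptyset,\emptyset,D)+K(E,O,E)=2C_{2n-1}+K(E,O,E).}
By Lemma \ref{lemma:KEOO} it therefore suffices to show $K(E,O,E)=K(E,O,O)$.

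For this, take a path in $\mathcal{K}(E,O,E)$ and let $q=(2m+1,2m+1)$ be its first diagonal point. Keep the initial segment from $(0,0)$ to $q$ unchanged, and replace the tail from $q$ to $(2n,2n)$ by its half-turn rotation, i.e.\ reverse its sequence of steps. The rotation is about the midpoint of the segment from $q$ to $(2n,2n)$, so it maps $(x,y)$ to $(2n+2m+1-x,\,2n+2m+1-y)$. It fixes the diagonal setwise and sends the diagonal point $(j,j)$ to $(2n+2m+1-j,\,2n+2m+1-j)$. Because $2n+2m+1$ is odd, this flips the parity of $j$. It also exchanges the two endpoints $q$ and $(2n,2n)$, so the diagonal points lying strictly between them are permuted among themselves.

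Consequently the new tail avoids $O$ strictly after $q$ exactly when the old tail avoided $E$ there. The initial segment is untouched, so $q$ remains the first diagonal point, the condition of avoiding $E$ before $q$ still holds, and the resulting path lies in $\mathcal{K}(E,O,O)$. The map is an involution on tails, hence a bijection, which gives $K(E,O,E)=K(E,O,O)=C_{2n}+2C_{2n-1}$ and the theorem follows.

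The main point to check carefully is the parity bookkeeping. The reversed tail must not create a diagonal point before $q$; it cannot, since only the portion after $q$ changes. The endpoint $(2n,2n)$, which is excluded from $E$ and $O$ by definition, must correspond exactly to $q$; it does, since the rotation swaps them. Finally, the case $q=(2n-1,2n-1)$ is degenerate but harmless: the tail is then a two-step path with no interior diagonal points, so both conditions hold vacuously.
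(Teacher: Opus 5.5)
Your proposal is correct and follows the paper's proof: you use the same decomposition $K(\emptyset,\emptyset,E)=K(\emptyset,\emptyset,D)+K(E,O,E)$, the same half-turn rotation of the tail after the first (necessarily odd) diagonal point to identify $\mathcal{K}(E,O,E)$ with $\mathcal{K}(E,O,O)$, and then Lemma~\ref{lemma:KEOO} together with (\ref{non-diag}). Your explicit rotation formula $(x,y)\mapsto(2n+2m+1-x,\,2n+2m+1-y)$ simply spells out the parity exchange that the paper only asserts.
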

\begin{proof}
First, recall that $y_{2,2}(n) = K(\emptyset,\emptyset,E)$.  The set $\mathcal{K}(E,O,E)$ is the same as $\mathcal{K}(\emptyset,\emptyset,E)$, except that the latter contains paths that do not touch the diagonal.  Therefore,
\neqn{K(\emptyset,\emptyset,E) = K(E,O,E) + K(\emptyset,\emptyset,D). \label{kjnarbfkujn}}
A bijection exists between $\mathcal{K}(E,O,O)$ and $\mathcal{K}(E,O,E)$: in both directions, perform a half-turn rotation on the part of the path after the first diagonal point.  (See Figure \ref{fig:KEOO} for an example.) This is a bijection because the rotation effectively exchanges the even and odd forbidden points. Therefore, $K(E,O,E) = K(E,O,O)$.  We computed $K(E,O,O)$ in Lemma \ref{lemma:KEOO}, and $K(\emptyset,\emptyset,D)=2C_{2n-1}$ (see (\ref{non-diag})), so the result follows upon substitution into (\ref{kjnarbfkujn}).
\end{proof}

\begin{figure}[H]
\centering
\latticepath{0.27\textwidth}{0.8mm}{4}{4}{1}{}{}{
\draw (0,0) -- (1,0) -- (1,1) -- (3,1) -- (3,4) -- (4,4);
\draw[loosely dotted, ultra thick] (0,0) -- (1,0) -- (1,1) -- (1,2) -- (4,2) -- (4,4);
}
\caption{A path in $\mathcal{K}(E,O,O)$ (dotted) and the corresponding path in $\mathcal{K}(E,O,E)$ (solid).} \label{fig:KEOO}
\end{figure}
This concludes the bijective proof.  For our generating-function proof, we decompose the paths into subpaths, use the symbolic method to produce a generating function in terms of the Catalan OGF, and simplify the result.

\begin{proof}[Analytic proof of Theorem \ref{y22thm}]
Let an \emph{arch} be a minimal bridge with respect to the line $y=x$.\footnote{In the previous proof, we would have called this an element of $\mathcal{K}(\emptyset,\emptyset,D)$.} In this situation, we say that the \emph{length} of an arch is the number of rightward steps that it contains. For example, if the arch is from $(0,0)$ to $(m,m)$, then its length is $m$.

The key decomposition is that each path in $\mathcal{Y}_{2,2}(n)$ is either a single even-length arch, or two odd-length arches bookending a (possibly-empty) sequence of even-length arches. It remains to write down the generating function for the even- and odd-length arches. Firstly, let $C(x)$ be the OGF of the Catalan numbers, and let $C_E(x)$ and $C_O(x)$ be its even and odd parts, respectively:
\eqn{C(x) = \sum_{n=0}^\infty \frac{1}{n+1} \binom{2n}{n} \cdot x^n = \frac{1-\sqrt{1-4x}}{2x}}
\eqn{C_E(x) = \sum_{n=0}^\infty \frac{1}{2n+1} \binom{4n}{2n} \cdot x^{2n} = \frac{C(x) + C(-x)}{2} = \frac{\sqrt{1+4x} - \sqrt{1-4x}}{4x}}
\eqn{C_O(x) = \sum_{n=0}^\infty \frac{1}{2n+2} \binom{4n+2}{2n+1} \cdot x^{2n+1} = \frac{C(x) - C(-x)}{2} = \frac{2 - \sqrt{1+4x} - \sqrt{1-4x}}{4x}}
Now consider the generating function $Y_{2,2}(x)=\sum_n y_{2,2}(n) x^{n}$.  Because there are $2C_{m-1}$ arches of length $m$, the key decomposition yields
\neqn{Y_{2,2}(x) = \left( 2\sqrt{x} C_E(\sqrt{x}) \right)^2 \cdot \frac{1}{1 - 2\sqrt{x} C_O(\sqrt{x})} + 2\sqrt{x} C_O(\sqrt{x}). \label{Y22x}}
The rest of the proof amounts to plugging in the above expressions for $C_E(x)$ and $C_O(x)$ and carefully simplifying. The details can be found in Appendix \ref{app:alg}.
\end{proof}
\begin{corollary} \label{y22cor}
$Y_{2,2}(x) = 1 - C_E(\sqrt{x}) \sqrt{1-16x}= 1 - \dfrac{\sqrt{2}\sqrt{1-16x}}{\sqrt{1+\sqrt{1-16x}}}$.
\end{corollary}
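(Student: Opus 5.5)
The plan is to derive the corollary directly from equation (\ref{Y22x}) in the analytic proof of Theorem \ref{y22thm}, working with $t=\sqrt{x}$. Set $s=\sqrt{1+4t}$ and $r=\sqrt{1-4t}$, so that $s^2-r^2=8t$, $s^2+r^2=2$, and $sr=\sqrt{1-16t^2}=\sqrt{1-16x}$. The closed forms of $C_E$ and $C_O$ then give
\[
2tC_E(t)=\tfrac{s-r}{2},\qquad 2tC_O(t)=\tfrac{2-s-r}{2},\qquad 1-2tC_O(t)=\tfrac{s+r}{2}.
\]
Substituting these into (\ref{Y22x}) gives
\[
Y_{2,2}=\frac{(s-r)^2}{2(s+r)}+\frac{2-s-r}{2}.
\]
Next, I rewrite $(s-r)^2/(s+r)$ as $(s-r)^3/(s^2-r^2)=(s-r)^3/(8t)$.

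To reach the target form, I compare with $1-C_E(t)\,sr=1-\frac{(s-r)sr}{4t}$. It suffices to check the identity
\[
\frac{(s-r)^2}{2(s+r)}-\frac{s+r}{2}=-\frac{(s-r)sr}{4t}.
\]
Multiplying through by $2(s+r)$, the left side becomes $(s-r)^2-(s+r)^2=-4sr$. The right side becomes $-\frac{(s-r)(s+r)sr}{2t}=-\frac{8t\,sr}{2t}=-4sr$, so the two sides agree. This gives the first equality $Y_{2,2}(x)=1-C_E(\sqrt{x})\sqrt{1-16x}$.

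For the second equality, I need $C_E(t)=\frac{s-r}{4t}=\frac{\sqrt2}{\sqrt{1+\sqrt{1-16x}}}$. First, $(s-r)^2=2-2sr$. Then
\[
\frac{(s-r)^2}{16t^2}=\frac{2(1-sr)}{16x}=\frac{2(1-sr)}{(1-sr)(1+sr)}=\frac{2}{1+sr},
\]
using $16x=1-(sr)^2$. Both sides are positive for small $t>0$, which fixes the branch of the square root. Because $C_E(t)$ is a function of $t^2=x$, the right-hand side is a genuine power series in $x$. This step serves as a sanity check that the formula is well defined.

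The main obstacle is purely the algebraic simplification in the first equality, together with keeping track of signs and branches. Introducing $s$ and $r$ and using $s^2-r^2=8t$ is meant to avoid expanding nested radicals. As a final check, I will confirm at $x=0$ that $Y_{2,2}(0)=1-1=0$ and that the linear coefficient is $8\cdot\tfrac{1}{1}\cdot\ldots$, which should match $y_{2,2}(1)=C_2+4C_1=6$. Expanding $1-\sqrt2(1+\sqrt{1-16x})^{-1/2}$ gives $1-(1-4x+\dots)\,\cdot$ (first order) and hence a linear coefficient of $6$, consistent with the theorem.
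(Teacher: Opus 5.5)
Your argument is correct and is essentially the paper's: substitute the closed forms of $C_E$ and $C_O$ into (\ref{Y22x}), simplify to $1-C_E(\sqrt{x})\sqrt{1-16x}$, and then identify $C_E$ with $\sqrt{2}/\sqrt{1+\sqrt{1-16x}}$. The differences are minor: the paper clears the denominator with $C_E=1+2xC_EC_O$ and rationalizes $C_E=2/(A+B)$, while you read $1-2tC_O(t)=(s+r)/2$ off the closed form and square $C_E$ directly, which amounts to the same identities. Only your closing coefficient check is garbled as written (the correct expansion is $\sqrt{2}/\sqrt{1+\sqrt{1-16x}}=1+2x+\cdots$ times $\sqrt{1-16x}=1-8x+\cdots$, giving $Y_{2,2}(x)=6x+\cdots$), but it is not needed for the proof.
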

The proof can be found in Appendix \ref{app:alg}.

\section{Acknowledgements}

We would like to thank William Kuszmaul and David W.\ Brown for their advice on preparing this paper, and Qu\'{y} Nh\^{a}n and StackExchange user ``user" for a different proof of Theorem (\ref{y22thm}) (not reproduced here). We also thank an anonymous reviewer from the Journal of Integer Sequences for comments on an early draft. B. Lou would also like to thank Holden Mui for a discussion about the proper definition of ``minimal". B. Lou also gratefully acknowledges support from the  Fannie and John Hertz Foundation and from a first-year graduate fellowship through the MIT Department of Physics and the MIT Center for Theoretical Physics.


\appendix

\section{Numerical checks for further formulas}\label{app:numerics}
Based on the results of this paper---Corollary \ref{i0paths} and Theorem \ref{y22thm} in particular---one might conjecture that
\neqn{y_{a,b}(n)\bigg/\dbinom{(a+b)\cdot n}{b\cdot n} \label{ybinom}}
is a rational function of $a$, $b$, and $n$; however, further computation suggests that this conjecture holds only when $a=b=2$ or when $1\in\{a,b\}$.  For example, when $b=3$ and $a\in\{2,3\}$, we have the following data for $1\leq n\leq9$:
\eqn{\begin{array}{|c|c|c|c|c|}\hline
n & \begin{array}{c}y_{2,3}(n)\\\seqnum{A337351}(n)\end{array} & y_{2,3}(n)\bigg/\dbinom{5n}{2n} & \begin{array}{c}y_{3,3}(n)\\\seqnum{A337352}(n)\end{array} & y_{3,3}(n)\bigg/\dbinom{6n}{3n} \\\hline
1 &           10 &                               1 &              20 &                               1 \\\hline
2 &          110 &                           11/21 &             524 &                         131/231 \\\hline
3 &         1805 &                        361/1001 &           19660 &                        983/2431 \\\hline
4 &        34770 &                          61/221 &          854380 &                   213595/676039 \\\hline
5 &       731760 &                     18294/81719 &        40304080 &                  503801/1938969 \\\hline
6 &     16295600 &                  651824/3459729 &      2004409236 &             167034103/756261275 \\\hline
7 &    377438250 &                   148015/909788 &    103440770760 &            862006423/4485482287 \\\hline
8 &   8999246900 &            179984938/1257042033 &   5486614131756 &      457217844313/2687300306925 \\\hline
9 & 219399101415 &        43879820283/343176898988 & 297239307415792 &      379131769663/2483341104143 \\\hline
\end{array}}
The number of digits in these fractions appears to increase roughly linearly with $n$, whereas if (\ref{ybinom}) were a rational function of $n$, then we would expect the number of digits to increase logarithmically.

\section{Generating-function proof of Theorem \ref{y22thm}}\label{app:alg}
In line (\ref{Y22x}), we had
\eqn{Y_{2,2}(x) = \left( 2\sqrt{x} C_E(\sqrt{x}) \right)^2 \cdot \frac{1}{1 - 2\sqrt{x} C_O(\sqrt{x})} + 2\sqrt{x} C_O(\sqrt{x}),}
where
\eqn{C(x) = \frac{1-\sqrt{1-4x}}{2x},}
\neqn{C_E(x) = \frac{C(x)+C(-x)}{2} = \frac{\sqrt{1+4x} - \sqrt{1-4x}}{4x}, \label{CEfmla}}
and
\neqn{C_O(x) = \frac{C(x)-C(-x)}{2} = \frac{2 - \sqrt{1+4x} - \sqrt{1-4x}}{4x}. \label{COflma}}
To suppress the proliferation of square roots and halved exponents, we replace $x$ with $x^2$:
\neqn{Y_{2,2}(x^2) = \frac{4x^2 \cdot C_E(x)^2}{1 - 2x \cdot C_O(x)} + 2x \cdot C_O(x). \label{vpzobxui}}
Now the Catalan OGF satisfies $C(x) = 1 + x \cdot C(x)^2$, so
\eqn{C(x) = 1 + x \cdot (C_E(x) + C_O(x))^2}
\eqn{ = 1 + x C_E(x)^2 + x C_O(x)^2 + 2 x C_E(x) C_O(x).}
The middle two terms are both odd, while the outer two terms are both even.  Therefore
\neqn{C_E(x) = 1 + 2 x C_E(x) C_O(x). \label{CECO}}
Substituting this into (\ref{vpzobxui}) to eliminate the denominator then yields
\eqn{Y_{2,2}(x^2) = 4x^2 \cdot C_E(x)^3 + 2x \cdot C_O(x).}
Substituting from (\ref{CEfmla}) and (\ref{COflma}) then yields
\neqn{Y_{2,2}(x^2) = 1 + \frac{(1-4x) \sqrt{1+4x} - (1+4x) \sqrt{1-4x}}{4x}. \label{Y22x2}}
Invoking the binomial series yields
\eqn{ = 1 + \frac{1}{4x} \left( (1-4x) \sum_{n=0}^\infty \binom{2n}{n} \frac{(-1)^{n+1}}{2n-1} x^n - (1+4x) \sum_{n=0}^\infty \binom{2n}{n} \frac{(-1)^{n+1}}{2n-1} (-x)^n \right),}
which we then combine into a single sum to obtain
\eqn{ = 1 + \frac{1}{4x} \sum_{n=0}^\infty \left( (1+4x) - (1-4x) (-1)^n\right) \binom{2n}{n} \frac{x^n}{2n-1}.}
We then split this sum into its even- and odd-indexed subseries as
\eqn{ = 1 + \frac{1}{4x} \left( \sum_{\substack{n=0 \\ n \text{ odd}}}^\infty 2 \binom{2n}{n} \frac{x^n}{2n-1} + \sum_{\substack{n=0 \\ n \text{ even}}}^\infty 8x \binom{2n}{n} \frac{x^n}{2n-1} \right),}
which we reindex as
\eqn{ = 1 + \frac{1}{4x} \left( \sum_{n=0}^\infty \binom{4n+2}{2n+1} \frac{2x^{2n+1}}{4n+1} + \sum_{n=0}^\infty \binom{4n}{2n} \frac{8x^{2n+1}}{4n-1} \right).}
Combining these sums back into a single series and simplifying finally yields
\eqn{Y_{2,2}(x^2) = \sum_{n=1}^\infty \frac{8n+1}{8n^2+2n-1} \binom{4n}{2n} x^{2n}.}
This completes the proof of Theorem \ref{y22thm}. To verify Corollary \ref{y22cor}, let
\eqn{A = \sqrt{1+4x} \qquadtext{and} B = \sqrt{1-4x}}
so that
\eqn{AB = \sqrt{1-16x^2} \qquadtext{and} C_E(x)=\frac{A-B}{4x}.}
The second equality is (\ref{CEfmla}). Therefore, using (\ref{Y22x2}),
\neqn{Y_{2,2}(x^2) = 1-\frac{A^2B-AB^2}{4x} = 1-C_E(x)\,AB = 1-C_E(x)\sqrt{1-16x^2}. \label{Y22x_using_CE}}
This yields the first equality in Corollary 4.1.  For the second equality, rationalize $C_E(x)$:
\eqn{
C_E(x) = \frac{A-B}{4x} \cdot \frac{A+B}{A+B} = \frac{A^2-B^2}{4x(A+B)} = \frac{8x}{4x(A+B)} = \frac{2}{A+B}.}
Moreover,
\eqn{(A+B)^2 = A^2 + B^2 + 2AB = (1+4x) + (1-4x) + 2\sqrt{1-16x^2} = 2 \left(1 + \sqrt{1-16x^2} \right).}
Upon taking the square root, we observe that the minus root is extraneous; therefore,
\eqn{A+B = \sqrt{2} \, \sqrt{1+\sqrt{1-16x^2}},}
so
\eqn{C_E(x) = \frac{\sqrt{2}}{\sqrt{1+\sqrt{1-16x^2}}}.}
Substituting into (\ref{Y22x_using_CE}) yields
\eqn{Y_{2,2}(x^2) = 1 - \frac{\sqrt{2}\,\sqrt{1-16x^2}}{\sqrt{1+\sqrt{1-16x^2}}},}
which is the claimed closed form.

\end{document}